\theoremstyle{plain}
\newtheorem{thm}{Theorem}[section]
\newtheorem{lem}[thm]{Lemma}
\newtheorem{prop}[thm]{Proposition}
\newtheorem{conj}[thm]{Conjecture}
\newtheorem{rmk}[thm]{Remark}
\newcommand\supp{\mathrm{supp}\;}
\newcommand\sgn{\mathrm{sgn}}
\newcommand\distinctsum{\sum_{\substack{j_1,...,j_n \in \mathbb{Z}^n\\ \mathrm{distinct}}}}
\newcommand\sS{\mathcal{S}}
\newcommand\RR{\mathbb{R}}
\definecolor {UMblue}  {RGB}{0, 39, 76}
\definecolor {UMmaize} {RGB}{255, 203, 5}
\definecolor {color_b}{RGB}{255,0,0}
\definecolor {color_c}{RGB}{20, 200, 30}
\definecolor {color_a}{RGB}{0,0,255}
\definecolor{lgreen} {RGB}{180,210,100}
\definecolor{dblue}  {RGB}{20,66,129}
\definecolor{ddblue} {RGB}{11,36,69}
\definecolor{lred}   {RGB}{220,0,0}
\definecolor{nred}   {RGB}{224,0,0}
\definecolor{norange}{RGB}{230,120,20}
\definecolor{nyellow}{RGB}{255,221,0}
\definecolor{ngreen} {RGB}{98,158,31}
\definecolor{dgreen} {RGB}{78,138,21}
\definecolor{nblue}  {RGB}{28,130,185}
\definecolor{jblue}  {RGB}{20,50,100}
\begin{document}

\title[The Alternative Hypothesis]{Higher Correlations and the Alternative Hypothesis}
\author{Jeffrey C. Lagarias}
\address{Department of Mathematics, University of Michigan, Ann Arbor, MI 48109-1043}
\email{lagarias@umich.edu}

\author{Brad Rodgers}
\address{Department of Mathematics and Statistics, Queen's University, Kingston, ON, Canada  K7L 3N6}
\email{brad.rodgers@queensu.ca}


\begin{abstract}
The Alternative Hypothesis concerns a hypothetical and unlikely picture of how zeros of the Riemann zeta function are spaced which one would like to rule out. 
In the Alternative Hypothesis, the renormalized distance between nontrivial zeros is supposed to always lie at a half integer. It is known that the Alternative Hypothesis
 is compatible with what is known about the pair correlation function of zeta zeros. We ask whether what is currently known about higher correlation functions of the zeros 
 is sufficient to rule out the Alternative Hypothesis and show by construction of an explicit counterexample point process that it is not. A similar result was recently independently obtained by T. Tao, using slightly different methods.

We also apply the ergodic theorem to this point process to show there exists a deterministic collection of points lying in $\tfrac{1}{2}\mathbb{Z}$ which satisfy the Alternative Hypothesis spacing but mimic the local statistics which are currently known about zeros of the zeta function.
\end{abstract}

\keywords{Alternative hypothesis, GUE distribution, point process, sine kernel}
\maketitle

\section{Introduction}
\label{sec:intro}

The purpose of this note is to address the following question: can the \emph{Alternative Hypothesis} regarding the distribution of zeros of the Riemann zeta function be ruled out by 
\emph{what is currently known about the local distribution of such zeros}? We explain more exactly what is meant by both these expressions below. The answer to this question is  ``{\em no}", 
and we will demonstrate this by  construction of a counterexample sequence $\{ c_j\}$ supported on $\frac{1}{2} \mathbb{Z}$, with spacing statistics mimicking what is known about the zeta zeros. 
A precise formulation  is given in Theorem \ref{alternative_GUE}.

Though the statement of Theorem \ref{alternative_GUE} requires no background from probability to understand, in our proof we find it natural to make use of the language of point processes. 
These are introduced in the next section, and our main result concerns a question in  {the realizability of point processes} (a topic recently studied by 
Kuna, Lebowitz, and Speer \cite{KuLeSp07,KuLeSp11}). In particular we construct an {\em Alternate Hypothesis Point Process}, with spacing statistics mimicking 
what is known about the zeta zeros and with the spacing between the points of any configuration always lying in the lattice $\frac{1}{2}\mathbb{Z}$. It is by sampling a 
configuration from this process that we obtain a deterministic sequence $\{c_j\}$ as above.

\section{Local Distribution of Zeta Zeros}
Before formulating  results  we review what we mean by the Alternative Hypothesis and the phrase `what is known about the local distribution of zeta zeros.'

\subsection{Spacings of zeros}

We label the non-trivial zeros of the Riemann zeta-function by $\{\sigma_j + i\gamma_j\}_{j \in \mathbb{Z}}$, ordered such that $\gamma_j \leq \gamma_{j+1}$. It dates back to 
Riemann that the ordinates $\gamma_j$ have density roughly $\tfrac{1}{2\pi}\log|T|$ near the value $T$, and for this reason is is natural to study the re-spaced ordinates
$$
\widetilde{\gamma}_j = \ \gamma_j \frac{\log |\gamma_j|}{2 \pi}.
$$
which we call {\em normalized zeros}. Thus respaced, the points $\widetilde{\gamma}_j$ have average density $1$; that is 
\begin{equation}
\label{1level}
\#\{\widetilde{\gamma}_j \in [0,T]\} \sim T
\end{equation}
as $T \rightarrow \infty$.

\subsection{GUE Hypothesis} 

Finer scale information regarding the spacing of zeros is predicted by the GUE Hypothesis.
Let
$$
S(x):= \begin{cases} \frac{\sin \pi x}{\pi x} & \textrm{for}\, x\neq 0 \\ 1 & \textrm{for}\, x=0. \end{cases}
$$
This function is a rescaled sinc-function, $S(x) = {\rm sinc} (\pi x)$ and $\int_{-\infty}^{\infty} S(x) dx=1$.

\begin{conj}[GUE Hypothesis]
\label{GUEHypothesis}
For $n\geq 1$ and any $\varphi \in \mathcal{S}(\mathbb{R}^n),$
$$
\lim_{T \to \infty} \frac{1}{T } \int_T^{2T} \distinctsum \varphi(\widetilde{\gamma}_{j_1}-t,...,\widetilde{\gamma}_{j_n}-t) \, dt = \int_{\mathbb{R}^n} 
\varphi(x)  \det_{1 \leq i \leq j \leq n} \big[ S(x_i - x_j) \big]\, d^n x.
$$
\end{conj}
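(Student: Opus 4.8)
The plan is to follow the explicit-formula method, which proves the $n=2$ case (Montgomery's pair correlation theorem) and, more generally, establishes Conjecture~\ref{GUEHypothesis} under a restriction on the Fourier support of $\varphi$ (Rudnick--Sarnak). By Fourier inversion one writes $\varphi$ as a superposition of products $g_1(x_1)\cdots g_n(x_n)$ with each $\widehat{g_i}$ smooth and compactly supported; it suffices to treat such product test functions and then recombine, so the problem reduces to understanding $\frac{1}{T}\int_T^{2T}\prod_{i=1}^n\big(\sum_{j}g_i(\widetilde\gamma_j-t)\big)\,dt$ after the diagonal terms of the sum over $n$-tuples of zeros are removed. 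The explicit formula rewrites each inner sum $\sum_j g_i(\widetilde\gamma_j-t)$ as a main term of size $\widehat{g_i}(0)\log T$ plus an arithmetic sum over prime powers $m$ weighted by $\Lambda(m)\,m^{-1/2}$ times a smooth cutoff built from $\widehat{g_i}$. Expanding the product over $i$ and integrating in $t$ then produces a main term together with a large family of error terms indexed by $n$-tuples of prime powers, the $t$-integration imposing (approximate) multiplicative constraints among them.

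The argument then splits these tuples into \emph{diagonal} and \emph{off-diagonal} contributions. For the diagonal tuples---those in which the prime powers pair up so that the multiplicative constraint holds identically---the relevant sums can be evaluated using only $\sum_{p\le x}\tfrac{\log p}{p}\sim\log x$, and a combinatorial computation shows that their total contribution reproduces precisely the determinantal integral on the right-hand side of Conjecture~\ref{GUEHypothesis}; this is the step where the sine kernel $S$ and the GUE cluster structure emerge, and it is essentially formal once the diagonal prime sums are in hand, the required identity being the expansion of $\det[S(x_i-x_j)]$ into cycle-type terms. The off-diagonal tuples are controlled, in principle, by additive correlations of the von Mangoldt function: one needs estimates for sums such as $\sum_{m_1\cdots m_a - m_{a+1}\cdots m_k = h}\Lambda(m_1)\cdots\Lambda(m_k)$ and their variants, weighted against the smooth cutoffs, and one hopes that the singular-series main terms predicted for these correlations by the Hardy--Littlewood prime $k$-tuple conjectures cancel against secondary terms generated by the explicit formula, leaving an error of size $o(1)$ as $T\to\infty$.

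The hard part---indeed, the part that keeps the full Conjecture~\ref{GUEHypothesis} open---is exactly this off-diagonal analysis. Unconditionally the off-diagonal sums are provably negligible only when $\supp\widehat\varphi$ lies in a sufficiently small neighborhood of the origin, which confines the relevant prime products to a range so short that trivial bounds suffice; this is the content of the Rudnick--Sarnak theorem and is the most that is currently known without hypotheses. For general $\varphi$, even granting the full quantitative strength of the Hardy--Littlewood conjectures one must still exhibit the conjectured cancellation of singular series against explicit-formula secondary terms, uniformly over all the tuples and over the smooth weights, and with a power-saving error; no presently available arithmetic input is known to supply this. I would therefore expect a genuine proof to require both a uniform effective form of the prime $k$-tuple conjecture and a new device for taming the oscillatory sums over tuples that it produces. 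Absent such input, the method delivers only the restricted-support statement, and Conjecture~\ref{GUEHypothesis} must stand as written.
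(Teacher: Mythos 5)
You have set out to prove a statement that the paper itself presents only as Conjecture~\ref{GUEHypothesis}: the GUE Hypothesis is not proved anywhere in the paper, and indeed the entire point of the paper is that current knowledge falls short of it. There is therefore no ``paper's own proof'' to compare against, and your writeup --- quite properly --- does not claim to supply one. What you have written is an accurate sketch of the explicit-formula strategy behind the partial results the paper does cite as Theorems~\ref{thm_correlations1} and~\ref{thm_correlations} (Montgomery for $n=2$, Hejhal for $n=3$, Rudnick--Sarnak for general $n$): reduce to product test functions, apply the explicit formula, evaluate the diagonal prime-power tuples to produce the sine-kernel determinant, and control the off-diagonal tuples only when the Fourier support of $\varphi$ is restricted (in the paper's normalization, $|\xi_1|+\cdots+|\xi_n|<2$, or the sum of coordinates nonzero). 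One small correction: the paper's Theorem~\ref{thm_correlations} is stated under the Riemann Hypothesis, so even the restricted-support statement you call ``unconditional'' is, in the form used here, conditional on RH; only the $n=1$ density statement is unconditional.

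Your concluding judgment --- that without new arithmetic input (effective Hardy--Littlewood correlations of $\Lambda$, plus a mechanism for the conjectured cancellations) the method yields only the band-limited version --- is not merely plausible; the paper's main theorems make it provably unavoidable at the level of local statistics. Theorem~\ref{pp_counterexample} constructs the AH point process, supported on $\tfrac12\mathbb{Z}$ up to a random shift, whose correlation functions agree with the sine-process determinant against every test function in $\mathcal{K}_n$ for all $n$, and Theorem~\ref{alternative_GUE} extracts from it a deterministic sequence with half-integer gaps matching all of Theorem~\ref{thm_correlations}. Hence no argument that uses only the band-limited correlation data (however cleverly recombined) can ever force the full determinantal statistics of Conjecture~\ref{GUEHypothesis}; the restricted-support input is consistent with radically non-GUE behavior. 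So the ``gap'' in your proposal is simply that it is not, and cannot at present be, a proof --- which is exactly the status the paper assigns to the statement.
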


Here $\mathcal{S}(\mathbb{R}^n)$ denotes the space of Schwartz class functions defined on $\mathbb{R}^n$.
By convention for $n=1$ the right side of the formula is $\int_{\mathbb{R}} \varphi(x_1) d x_1.$ 

The GUE Hypothesis emerged from work of Montgomery \cite{Mo73} and is by now widely believed to be true -- see e.g. Odlyzko \cite{Od87} for numerical evidence 
in its favor, and Katz and Sarnak  \cite{KaSa99}, \cite{KaSa99b}  for a theoretical discussion of analogous problems. For some test functions $\varphi$ 
Conjecture \ref{GUEHypothesis} is known to be true, and we discuss this below.

The case $n=2$ of the GUE Hypothesis already implies (see \cite{Mo73}) that the normalized distance between consecutive zeros becomes arbitrarily small and moreover that for any $\epsilon > 0$,
$$
\widetilde{\gamma}_{j+1} - \widetilde{\gamma}_j < \epsilon\quad \textrm{for a positive proportion of }j.
$$
A demonstration even that there exists some value $0< \delta < 1/2$ such that 
\begin{equation}
\label{gamma_often_small}
\widetilde{\gamma}_{j+1} - \widetilde{\gamma}_j \leq \delta \quad \textrm{for a positive proportion of }j
\end{equation}
would be of extraordinary interest since it would have as an implication that there are no Landau-Siegel zeros (see \cite{CoIw02}). For this reason it is of interest to explore models of zeros in which \eqref{gamma_often_small} is not true.\footnote{Recent work of Carniero et. al. \cite{CaChLiMi17} shows that $\widetilde{\gamma}_{j+1} - \widetilde{\gamma}_j \leq .606894$ for a positive proportion of $j$, which seems to be the best result known in this direction. See also \cite{So96}. Goldston and Turnage-Butterbaugh \cite{GoTuBu19} shows that $\liminf \; (\widetilde{\gamma}_{j+1} - \widetilde{\gamma}_j) \leq 0.50412$, using a result of Conrey, Ghosh, and Gonek \cite{CoGhGo84}. (The proof does not show that gaps become this small a positive proportion of the time however.) An unpublished result of Goldston and Milinovich shows that the method of \cite{CoGhGo84} cannot be used to show that $\liminf \; (\widetilde{\gamma}_{j+1} - \widetilde{\gamma}_j) \leq \mu$ if $\mu \leq 0.500026$.}

\subsection{The Alternative Hypothesis} An extreme example of such a model is described by Alternative Hypothesis, 
which says that almost all spacings 
of rescaled zeros $\widetilde{\gamma}_j- \widetilde{\gamma}_k$  lie close to half-integers.

\begin{conj}[The Alternative Hypothesis]
\label{AltHypothesis}
For all sufficiently large $j$,
$$
\widetilde{\gamma}_{j+1} - \widetilde{\gamma}_j = h_j + o(1),
$$
with $h_j$ an element of the set $\{\tfrac{1}{2}, 1, \tfrac{3}{2}, 2,...\}$ for all $j$.
\end{conj}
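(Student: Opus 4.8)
\emph{A candidate approach.} The only known mechanism that would plausibly force the rigid spacing of Conjecture~\ref{AltHypothesis} is the hypothetical existence of Landau--Siegel zeros; indeed, since proving \eqref{gamma_often_small} with some $\delta<\tfrac12$ would \emph{rule out} such zeros, the Alternative Hypothesis sits exactly at the threshold below which no lattice structure is possible, which is consistent with the folklore that it is Landau--Siegel zeros that would produce it. Following the philosophy of Conrey--Iwaniec on the spacing of zeros of $L$-functions and the class number problem, one would begin by assuming an infinite sequence of real primitive characters $\chi_q$ whose $L$-functions have a real zero $\beta_q$ with $(1-\beta_q)\log q \to 0$. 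Applying the explicit formula to an auxiliary product such as $\zeta(s)^2 L(s,\chi_q)^2$, chosen so that the resulting sum over zeros is non-negative, one would try to extract a repulsion statement: in a window of height comparable to $q$, the normalized gaps $\widetilde\gamma_{j+1}-\widetilde\gamma_j$ are forced to concentrate near $\tfrac12\mathbb{Z}$, with the concentration tightening as $(1-\beta_q)\log q\to 0$. This would give the conclusion of Conjecture~\ref{AltHypothesis} along a sparse sequence of heights.

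\emph{Upgrading to all heights.} The statement as worded demands $\widetilde\gamma_{j+1}-\widetilde\gamma_j = h_j + o(1)$ for \emph{every} large $j$, so one would need the Landau--Siegel phenomenon to be active at essentially every height, not merely along a sparse sequence. This would require a density of exceptional moduli comparable to the density of the zeros themselves, which is flatly incompatible with the repulsion of exceptional moduli (Page, Landau) and with Siegel's theorem. Thus the route above, even granting that it gets started, cannot reach the uniform statement: some genuinely new input producing lattice spacing at \emph{all} heights would be required, and I know of no candidate.

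\emph{The main obstacle.} Two things put Conjecture~\ref{AltHypothesis} beyond current reach. First, no Landau--Siegel zero is known to exist, and it is widely expected that none do, in which case the approach above is vacuous. Second, and more decisively, Conjecture~\ref{AltHypothesis} directly contradicts the full GUE Hypothesis (Conjecture~\ref{GUEHypothesis}), which predicts a limiting gap distribution with a continuous density and hence no atom at $\tfrac12$; since the GUE Hypothesis is overwhelmingly believed and is known for band-limited test functions, any proof of the Alternative Hypothesis would first have to overturn it. For these reasons I do not expect Conjecture~\ref{AltHypothesis} to be provable with present methods; the statement one \emph{can} establish is the complementary negative one --- that the Alternative Hypothesis is not refuted by the presently known cases of the GUE Hypothesis --- and that is exactly the content of Theorem~\ref{alternative_GUE}.
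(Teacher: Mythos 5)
This statement is Conjecture~\ref{AltHypothesis}, not a theorem: the paper offers no proof of it, and indeed remarks that it is ``perhaps misleading to label this a conjecture, since almost certainly it is false,'' being contradicted by the GUE Hypothesis while remaining beyond current methods to disprove. Your assessment is therefore the correct one --- you rightly decline to prove it, explain why the only heuristic mechanism (Landau--Siegel zeros, in the spirit of Conrey--Iwaniec \cite{CoIw02}) could at best act at sparse heights and cannot yield the uniform ``for all sufficiently large $j$'' statement, and correctly identify that the paper's actual contribution is the complementary consistency result, Theorem~\ref{alternative_GUE}, proved via the AH point process. Your Landau--Siegel sketch should be read as motivation rather than an argument (the implication in \cite{CoIw02} runs from small gaps to the nonexistence of exceptional zeros, and the converse forcing of half-integer spacing is heuristic), but since you present it only as a candidate approach and then explain why it fails, there is no gap to flag: your conclusion agrees with the paper's stance on this statement.
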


It is 
perhaps misleading to label this a conjecture, since almost certainly it is false!
Clearly it is contradicted by the GUE Hypothesis.  
Nonetheless, our knowledge about the Riemann zeta-function
 is sufficiently limited that no one has been able to disprove Conjecture \ref{AltHypothesis}. 
 See Farmer, Gonek and Lee \cite[Section 2]{FarGonLee14} and 
 Baluyot \cite{Ba16} for  recent papers drawing out potential implications of the Alternative Hypothesis and a discussion of background material. 

One memorable implication of the Alternative Hypothesis concerns a statistic that is  commonly called the (two-point) \emph{form factor}
 $K$ of the zeros (see \cite[Sect. 2]{FarGonLee14}, \cite[p. 82]{Meh04}); 
the Alternative Hypothesis can be shown to imply
\begin{equation}
\label{alt_formfactor}
\frac{1}{T} \sum_{0 \leq \widetilde{\gamma}_j, \widetilde{\gamma}_k \leq T} f(\widetilde{\gamma}_j - \widetilde{\gamma}_k) \sim \int_\mathbb{R} \hat{f}(\xi) K_{Alt}(\xi)\, d\xi.
\end{equation}
for\footnote{Allowing test functions $f(x) \in \sS(\mathbb{R})$, the form factor $K_{Alt}$  is a tempered distribution, where  $\delta_0(\xi)$ denotes a Dirac Delta function centered at $\xi=0$.}  $K_{Alt}(\xi):=\delta_{0}(\xi)+ |\xi|$ for $\xi\in [-1,1]$,  and defined by periodicity elsewhere. 

By contrast the case $n=2$ of the GUE Hypothesis implies \eqref{alt_formfactor} with $K_{Alt}$ replaced by $K_{GUE}(\xi):= \delta(\xi)+ \min(|\xi|,1)$. (See Cor. 1.4 of \cite{Ba16}, though this result is stated in a slightly different form there.)



\begin{figure}[H]
    \centering
    \begin{minipage}{0.46\textwidth}
        \centering
        \includegraphics[width=1\textwidth]{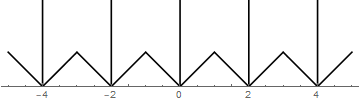} 
        \title{Form Factor $K_{{Alt}}(\xi)$}
    \end{minipage}\hfill
    \begin{minipage}{0.48\textwidth}
        \centering
        \includegraphics[width=1\textwidth]{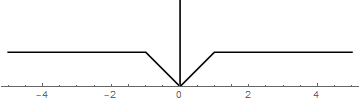} 
        \title{Form Factor $K_{GUE}(\xi)$}
    \end{minipage}
   \medskip 
	\caption{Form factors for the Alternative Hypothesis and GUE process (the vertical lines  represent Dirac Delta-functions)}
\label{fig:formfactor}
\end{figure}

\subsection{The local distribution of zeros: what is known}

In this paper our goal will be to show that, unlikely though it is, the Alternative Hypothesis is consistent with what is currently known about the statistical distribution of 
zeros. By \emph{what is known}, we mean the following partial version of the GUE Hypothesis that has been proved. We begin with what is known about the density of zeros.

\begin{thm}[GUE Density] 
\label{thm_correlations1}
 For any $\varphi \in \mathcal{S}(\mathbb{R})$, as $T \to \infty$, 
\begin{equation}
\label{thm_density}
\lim_{T \to \infty}\,  \frac{1}{T} \int_T^{2T} \sum_j \varphi(\widetilde{\gamma}_j-t)\, dt = \int_{\mathbb{R}} \varphi(x)\, dx.
\end{equation}
\end{thm}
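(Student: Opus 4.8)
The plan is to reduce the statement to the Riemann--von Mangoldt asymptotic, which is already available in the form \eqref{1level}. Set $\widetilde{N}(Y) := \#\{j : \widetilde{\gamma}_j \in (0,Y]\}$ for $Y \geq 0$, and extend $\widetilde{N}$ to $Y<0$ by the functional-equation symmetry $\widetilde{\gamma}_{-j} = -\widetilde{\gamma}_j$, so that $d\widetilde{N} = \sum_j \delta_{\widetilde{\gamma}_j}$. Then \eqref{1level} says exactly that $\widetilde{N}(Y) \sim Y$ as $Y \to +\infty$, and hence $\widetilde{N}(Y) = O(1+|Y|)$ uniformly on $\mathbb{R}$; these are the only facts about zeta zeros the argument will use, and in particular the ordering of the $\widetilde{\gamma}_j$ is irrelevant. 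Because $\varphi$ is Schwartz and $\widetilde{N}$ grows at most linearly, for each $t$ the sum $\sum_j \varphi(\widetilde{\gamma}_j - t)$ converges absolutely and equals the Riemann--Stieltjes integral $\int_{\mathbb{R}} \varphi(u-t)\, d\widetilde{N}(u)$, and Fubini/Tonelli justifies interchanging this with $\tfrac1T\int_T^{2T} dt$.

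First I would perform the $t$-integration on the inside: after the interchange one obtains $\tfrac1T\int_{\mathbb{R}} \Phi_T(u)\, d\widetilde{N}(u)$, where $\Phi_T(u) = \int_T^{2T}\varphi(u-t)\,dt = \int_{u-2T}^{u-T}\varphi(v)\,dv$. Integrating by parts in the Stieltjes integral (the boundary terms vanish since $\Phi_T$ inherits rapid decay from $\varphi$ while $\widetilde{N}$ is polynomially bounded), and using $\Phi_T'(u) = \varphi(u-T) - \varphi(u-2T)$, the substitutions $u = 2T+x$ and $u = T+x$ in the two resulting integrals give
\begin{equation*}
\frac{1}{T}\int_T^{2T}\sum_j \varphi(\widetilde{\gamma}_j - t)\, dt \;=\; \int_{\mathbb{R}} \left(\frac{\widetilde{N}(2T+x) - \widetilde{N}(T+x)}{T}\right)\varphi(x)\, dx .
\end{equation*}

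It then remains to pass to the limit under the integral. For each fixed $x$, the bracketed quantity tends to $2 - 1 = 1$ as $T \to \infty$, directly from $\widetilde{N}(Y)/Y \to 1$. The uniform bound $\widetilde{N}(Y) = O(1+|Y|)$ shows the bracket is dominated by $C(1+|x|)$ for all $T \geq 1$, which is integrable against the Schwartz function $\varphi$; so dominated convergence yields the limit $\int_{\mathbb{R}} \varphi(x)\,dx$, proving the theorem.

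The only genuinely delicate point is the one the average $\tfrac1T\int_T^{2T}dt$ is designed to finesse: $\widetilde{N}$ is a step function with jumps of order one, so a pointwise local assertion like ``$\widetilde{N}(t+a) - \widetilde{N}(t) \approx a$'' is \emph{false} and must not be invoked. Encoding the $t$-average as the smoothed window $\Phi_T$ and then integrating by parts is precisely what converts everything into the \emph{global} asymptotic $\widetilde{N}(Y) \sim Y$ sampled at the two moving points $T+x$ and $2T+x$, where the divergent $O(T)$ contributions visibly cancel in the difference. A secondary point worth care is the uniform-in-$T$ domination needed for the last step, for which it matters to use $\widetilde{N}(Y) = O(|Y|)$ rather than a cruder estimate such as $O(|Y|\log|Y|)$; fortunately the sharp-order bound is exactly the content of \eqref{1level}.
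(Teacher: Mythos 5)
Your argument is correct, and it follows exactly the route the paper indicates: the paper offers no detailed proof, merely remarking that \eqref{thm_density} ``is easily derived from \eqref{1level}'' (von Mangoldt's counting asymptotic), and your Stieltjes-integration-by-parts plus dominated-convergence computation is a sound fleshing-out of precisely that reduction. The displayed identity expressing the average as $\int_{\mathbb{R}} T^{-1}\bigl(\widetilde{N}(2T+x)-\widetilde{N}(T+x)\bigr)\varphi(x)\,dx$ and the uniform bound $\widetilde{N}(Y)=O(1+|Y|)$ are exactly the right ingredients, so nothing further is needed.
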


The relation \eqref{thm_density}  is unconditional, and is easily derived from \eqref{1level}.
This density in effect dates back to  Riemann's 1859 memoir \cite{Ri1859} and  \eqref{1level}
was proved by von Mangoldt (see \cite[Ch. 14]{MoVa07}).

For higher-order correlations the GUE hypothesis has been verified for a band-limited\footnote{By a bandlimited function on $\mathbb{R}^n$ we mean a function that has a compactly supported Fourier transform.} collection of test functions.
\begin{thm}[Band-limited correlations] 
\label{thm_correlations}
Assume the Riemann Hypothesis. Let $\mathcal{K}_n$ denote the collection of 
band-limited test functions $\eta \in \mathcal{S}(\mathbb{R}^n)$ whose Fourier transform has support 
$$
\supp \hat{\eta} \subset \big\{ \xi \in \mathbb{R}^n:\, \xi_1+\cdots+\xi_n \neq 0 \quad \mathrm{or} \quad |\xi_1|+\cdots+|\xi_n| < 2\,\big\}.
$$
Then for all $n \ge 1$ for any $\eta \in \mathcal{K}_n$, 
\begin{equation}
\label{eta_correlations}
\lim_{T \to \infty} \, \frac{1}{T} \int_T^{2T} \distinctsum \eta(\widetilde{\gamma}_{j_1}-t,...,\widetilde{\gamma}_{j_n}-t)\, dt 
= \int_{\mathbb{R}^n} \eta(x)  \det_{1\leq i,j \leq n}\Big[ S(x_i - x_j) \Big]\, d^n x.
\end{equation}
\end{thm}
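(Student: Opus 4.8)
The plan is to reduce the statement to the known analytic input on correlations of zeta zeros — namely the work of Montgomery for $n=2$ and its generalization by Rudnick--Sarnak (and Hejhal for $n=3$) to $n$-level correlations under RH — and then to verify that the Fourier-analytic hypothesis $\supp\widehat\eta \subset \{\xi_1+\cdots+\xi_n\neq 0\ \text{or}\ |\xi_1|+\cdots+|\xi_n|<2\}$ is exactly the range in which those results apply. First I would recall that the Rudnick--Sarnak theorem establishes \eqref{eta_correlations} for the narrower class of test functions whose Fourier support lies in $\{|\xi_1|+\cdots+|\xi_n|<2\}$ outright; the point of the present formulation is that one can enlarge the allowed support by any amount, provided one stays off the diagonal hyperplane $\xi_1+\cdots+\xi_n=0$. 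The mechanism for this enlargement is that the contribution of frequencies with $\xi_1+\cdots+\xi_n\neq 0$ to the off-diagonal sum is negligible: after inserting the explicit formula (relating $\sum_j \varphi(\widetilde\gamma_j - t)$ to a sum over prime powers), a nonzero value of $\xi_1+\cdots+\xi_n$ forces an oscillatory factor $|\gamma|^{i(\xi_1+\cdots+\xi_n)}$-type term whose average over $t\in[T,2T]$ decays, so such frequencies see neither the diagonal term nor any resonance and simply integrate to zero against the limiting kernel as well.

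The key steps, in order, would be: (i) decompose an arbitrary $\eta\in\mathcal K_n$ as $\eta = \eta_0 + \eta_1$ where $\widehat{\eta_0}$ is supported in a neighborhood of the hyperplane $\{\xi_1+\cdots+\xi_n=0\}$ — on which, by hypothesis, $|\xi_1|+\cdots+|\xi_n|<2$ — and $\widehat{\eta_1}$ is supported away from that hyperplane, using a smooth partition of unity adapted to the distance to $\{\sum \xi_i = 0\}$; (ii) for $\eta_0$, apply the Rudnick--Sarnak $n$-level correlation theorem directly, since its support condition is met; (iii) for $\eta_1$, show that both sides of \eqref{eta_correlations} vanish: the right-hand side because the determinant kernel $\det[S(x_i-x_j)]$ is a function of differences $x_i-x_j$ only, hence its Fourier transform is a measure supported precisely on $\{\xi_1+\cdots+\xi_n=0\}$, so it pairs to zero with any $\eta_1$ whose support avoids that set; and the left-hand side because, after the explicit-formula expansion, the $t$-average of the relevant exponential sums kills every term when $\xi_1+\cdots+\xi_n$ is bounded away from $0$ (this is where one needs the standard bound $\frac1T\int_T^{2T}|\gamma|^{i\alpha}\,dt \ll T/(1+|\alpha|\log T)$-type estimates, together with control on the prime-power sums of length up to $T$ coming from the support bound $<2$ being relaxed but the "free" frequencies not contributing); (iv) assemble (ii) and (iii) by linearity.

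I expect the main obstacle to be step (iii) on the zeta side: making rigorous the claim that the non-diagonal frequencies contribute nothing to $\lim_{T\to\infty}\frac1T\int_T^{2T}\sum^* \eta_1(\widetilde\gamma_{j_1}-t,\ldots)\,dt$ requires running the explicit formula in $n$ variables, bounding the resulting multiple sums over prime powers $p_1^{k_1},\ldots$, and exploiting cancellation in the $t$-integral — essentially re-doing the hard part of the Rudnick--Sarnak analysis but tracking the role of the off-diagonal frequency $\xi_1+\cdots+\xi_n$. One subtlety is that the support condition is a disjunction ($\sum\xi_i\neq 0$ \emph{or} $\sum|\xi_i|<2$), so on the piece $\eta_1$ one genuinely has no restriction on $|\xi_1|+\cdots+|\xi_n|$; the saving grace is that away from the hyperplane the oscillation in $t$ is uniform, so arbitrarily long prime-power sums are still damped. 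The remaining steps are comparatively routine: the partition of unity in (i) is standard, and the vanishing of the RHS in (iii) is an immediate consequence of translation-invariance of the sine-kernel determinant.
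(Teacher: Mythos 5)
Your overall structure—split $\eta$ according to the two regions in the definition of $\mathcal{K}_n$, quote Montgomery ($n=2$), Hejhal ($n=3$) and Rudnick--Sarnak ($n>3$) for the piece whose Fourier support lies in $\{|\xi_1|+\cdots+|\xi_n|<2\}$, and show that the piece supported off the hyperplane $\{\xi_1+\cdots+\xi_n=0\}$ contributes zero to both sides—is exactly how the paper views this theorem: it is presented as a repackaging of those references, with the off-hyperplane region dismissed as trivial ``owing to the averaging in $T$.'' Two remarks on your execution. First, in step (i) the statement ``on which, by hypothesis, $|\xi_1|+\cdots+|\xi_n|<2$'' only refers to points of $\supp\hat\eta$ lying exactly on the hyperplane; to know that a whole neighborhood of the hyperplane inside $\supp\hat\eta$ satisfies the strict inequality you should invoke the compactness of $\supp\hat\eta$ (which is part of the paper's definition of band-limited): otherwise support points with $\sum_i|\xi_i|\ge 2$ could accumulate on the hyperplane and the cut-off $\eta_0$ would not be admissible for Rudnick--Sarnak.

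Second, and more substantively, your step (iii) for the zeta side is the wrong route: you propose to run the explicit formula and control unrestricted prime-power sums using only the $t$-oscillation coming from $\xi_1+\cdots+\xi_n\neq 0$, and you correctly identify this as the main obstacle—but it is an obstacle you have created, and it is doubtful it can be pushed through as sketched (with no bound on $|\xi_1|+\cdots+|\xi_n|$ the Dirichlet-polynomial lengths are unbounded relative to $T$, which is precisely what the $<2$ restriction exists to prevent; also your displayed averaging bound is garbled, the integrand being independent of $t$). The correct argument here is elementary and needs neither RH nor any arithmetic input: if $\hat\eta_1$ vanishes in a neighborhood of the hyperplane, then for every fixed $x\in\mathbb{R}^n$ one has $\int_{\mathbb{R}}\eta_1(x_1-t,\ldots,x_n-t)\,dt=0$, since this integral is an integral of $\hat\eta_1$ over the hyperplane. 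Hence for each tuple of zeros the full-line $t$-integral vanishes, and the discrepancy between $\int_{\mathbb{R}}$ and $\int_T^{2T}$ is a boundary effect: by the rapid decay of $\eta_1$ and the unconditional counting of zeros (the number of normalized ordinates in a unit interval near height $T$ is $O(\log T)$, and $O(1)$ on average), these boundary tuples contribute $o(T)$ before dividing by $T$. This is exactly the mechanism the paper uses for the analogous step (i) in the proof of its point-process theorem, where the same hyperplane decomposition $\mathcal{K}_n\subset\mathcal{I}_n+\mathcal{J}_n$ appears. Your treatment of the right-hand side (translation invariance of the sine determinant forces its Fourier mass onto the hyperplane) is correct and matches the paper.
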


That \eqref{eta_correlations} is true for all $\eta$ with 
$$
\supp \hat{\eta} \subset \big\{ \xi \in \mathbb{R}^n:\, \xi_1+\cdots+\xi_n \neq 0 \big\}
$$
does not say anything special about the points $\{\widetilde{\gamma}_j\}$. 
It is true owing to the averaging in $T$; for test functions with Fourier transform supported in this domain both the left and right hand sides evaluate to $0$. 
However the  claim for band-limited functions $\eta$ having Fourier transform supported in the second region, 
$$
\supp \hat{\eta} \subset\{ \xi \in \mathbb{R}^n:\, |\xi_1|+\cdots+|\xi_n| < 2\},
$$
is much less trivial, at least for any 
part of this region intersecting $\xi_1+\cdots+\xi_n = 0$. The case $n=2$ is due to Montgomery \cite{Mo73}, $n=3$ is due to Hejhal \cite{He94}, and $n > 3$ is due to Rudnick and Sarnak \cite{RuSa96}.  Likely using sieve theoretic methods the region of $\xi$ such that $|\xi_1| + \cdots + |\xi_n| < 2$ can be extended to the closed region $|\xi_1| + \cdots + |\xi_n| \leq 2$, but this does not seem to be in the literature.

Note that  $\mathcal{K}_1 = \mathcal{S}(\mathbb{R})$, which corresponds to
Theorem \ref{thm_correlations1}. So no band-limitation is put on the test functions for $n=1$. We have singled out this case in Theorem \ref{thm_correlations1} to emphasize that in this case we have information for all test functions, as opposed to only partial information for $n \geq 2$.

\section{Main Result}
Our purpose in this note is to show that all information in Theorem \ref{thm_correlations} of Montgomery - Hejhal - Rudnick-Sarnak remains consistent with the Alternative Hypothesis. 

This matter did not seem to have been taken up previously in the literature, and its resolution depends upon somewhat special properties of the sine-determinant. Independently and very recently T. Tao \cite{Ta19} has also investigated the same question, and come to much the same conclusions, though by slightly different methods; Tao's starting point is the finite circular unitary ensemble and makes special use of symmetric function theory in order to do certain computations, whereas the method we give below begins with an infinite point process and makes special use of determinantal combinatorics and Poisson summation.

Our main result is as follows:

\begin{thm}
\label{alternative_GUE}
There exists a (deterministic) sequence of points $\{c_j\}_{j\in \mathbb{Z}}$ on the real line such that
$$
c_{j+1}-c_j \in \{\tfrac12, 1, \tfrac32, 2, ...\}
$$
for all $j$, and moreover for any $\varphi \in \mathcal{S}(\mathbb{R})$, 
$$
\lim_{T \to \infty} \, \frac{1}{T} \int_T^{2T} \sum_{j \in \mathbb{Z}} \varphi(c_j-t)\, dt  =\int_{\mathbb{R}} \varphi(x)\, dx,
$$
and for each $n\ge2$ and  any $\eta(x)  \in \mathcal{K}_n$, 
$$
\lim_{T \to \infty} \, \frac{1}{T} \int_T^{2T} \distinctsum \eta(c_{j_1}-t,...,c_{j_n}-t)\, dt  = \int_{\mathbb{R}^n}
\eta(x) \det_{1\leq i,j \leq n}\Big[ S(x_i - x_j) \Big]\, d^n x.
$$
\end{thm}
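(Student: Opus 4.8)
The plan is to produce the deterministic sequence $\{c_j\}$ by constructing a stationary point process $\Xi$ on $\mathbb{R}$ supported on (translates of) the lattice $\tfrac12\mathbb{Z}$, whose consecutive spacings always lie in $\{\tfrac12,1,\tfrac32,\dots\}$, whose intensity is $1$, and whose correlation functions agree with the sine-kernel correlations $\det[S(x_i-x_j)]$ for every test function in $\mathcal{K}_n$; then an ergodic-theorem argument extracts a single realization $\{c_j\}$ for which the $T$-averaged sums converge to the claimed integrals. So the first step is to set up the language: a simple point process, its $n$-point correlation (intensity) measures $\rho_n$, and the observation that the displayed limits in the theorem are exactly the statements $\rho_1(x)\,dx = dx$ and $\rho_n = \det[S(x_i-x_j)]$ tested against $\mathcal{K}_n$, provided the process is ergodic under the $\tfrac12\mathbb{Z}$-shift (or the $\mathbb{R}$-shift) — Birkhoff's theorem then gives almost-sure convergence of $\frac1T\int_T^{2T}\sum_j \eta(c_{j_1}-t,\dots)\,dt$ to the space average, which is the right-hand side.

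The heart of the matter is building the process. The idea is to start from the determinantal sine-kernel process $\mathcal{S}_{\mathrm{ine}}$ (the GUE bulk process), discretize it, and then exploit the band-limited freedom: because we only need to match correlations against $\eta$ with $\widehat\eta$ supported in $\{|\xi_1|+\cdots+|\xi_n|<2\}\cup\{\xi_1+\cdots+\xi_n\neq 0\}$, the process is allowed to differ from the sine process in "high-frequency" ways — in particular on the diagonal frequency hyperplane outside the ball of radius $2$. Concretely, I would place points on the randomly-shifted lattice $\tfrac12\mathbb{Z}+U$ with $U$ uniform on $[0,\tfrac12)$ (guaranteeing the lattice spacing condition automatically, though one then wants to allow some spacings to be multiples of $\tfrac12$, so more flexibly one thins a point configuration on $\tfrac12\mathbb{Z}+U$), and choose the law of the $\{0,1\}$-occupation variables on the lattice sites so that the resulting correlation functions are a suitable "periodization/aliasing" of $\det[S(x_i-x_j)]$. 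Poisson summation is the tool that computes the Fourier transform of this periodized kernel: one checks that the aliased correlation function has Fourier transform agreeing with that of $\det[S(x_i-x_j)]$ on the region $\{|\xi_1|+\cdots+|\xi_n|<2\}$, using that $\widehat S = \mathbf 1_{[-1/2,1/2]}$ is supported in a small box so its determinantal combinations are supported where $|\xi_1|+\cdots+|\xi_n|\le n/2$ — and, crucially, that the $\tfrac12\mathbb{Z}$ spacing makes the aliasing copies land exactly at distance $\ge$ something compatible with the radius-$2$ window. The determinantal structure (minors of the kernel, the Fredholm/positivity conditions of Soshnikov/Macchì) is what certifies that these prescribed $\rho_n$ genuinely come from an honest point process — this is where the "special properties of the sine-determinant" and the realizability results of Kuna–Lebowitz–Speer enter.

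The main obstacle I expect is precisely this realizability/positivity step: having written down a candidate family of correlation functions $\rho_n$ obtained by periodizing the sine kernel onto $\tfrac12\mathbb{Z}$, one must prove that they are the correlation functions of an actual stationary point process with the lattice-spacing property. Matching Fourier supports against $\mathcal{K}_n$ is essentially a computation with Poisson summation and the explicit box support of $\widehat S$; verifying ergodicity is routine once the process is built (e.g.\ mixing of determinantal processes, or an explicit i.i.d.-type construction over lattice blocks). But showing the discretized kernel defines a legitimate determinantal (or otherwise realizable) process — controlling the operator so that all minors are nonnegative and bounded, and simultaneously arranging the support on $\tfrac12\mathbb{Z}$ — is the delicate part, and I would expect the proof to spend most of its effort there, likely by writing the process as an explicit block construction rather than invoking an abstract determinantal existence theorem, and then computing its correlation functions directly and comparing Fourier transforms on the relevant region.
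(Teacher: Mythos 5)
Your proposal follows essentially the same route as the paper: construct the $\tfrac12$-discrete sine process on $\tfrac12\mathbb{Z}$ via Macchi's determinantal existence theorem, prove band-limited agreement with the continuous sine process by Poisson summation (using that the sine-determinant has Fourier support in $[-1,1]^n$), randomly translate by $\omega$ uniform on $[0,\tfrac12)$ to get the $\mathbb{R}$-invariant AH process, and extract a single configuration $\{c_j\}$ via Birkhoff's ergodic theorem. The one difference in emphasis is that the realizability step you flag as the delicate part is in fact immediate in this approach: restricted to $\tfrac12\mathbb{Z}$ the sine kernel acts as a projection (the Fourier multiplier by $\mathbf{1}_{[-1/2,1/2]}$), so Macchi's criterion $0 \leq \mathbb{K} \leq I$ holds at once, and no explicit block construction is needed.
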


This theorem will be seen to follow from Theorem \ref{pp_counterexample},
 which constructs a counterexample point process,
which we term the Alternative Hypothesis (AH) point process.
The demonstration that such a sequence $\{c_j\}_{j\in \mathbb{Z}}$ exists is probabilistic,
using an ergodic property of the AH point process. It would be interesting if there is a `nice' explicit formula for the $j$th entry of some specific sequence satisfying this relation, but we do not pursue this here.

The construction we produce completely specifies  the  correlation functions for the sequence $\{c_j\}$ against all test functions in  the Schwarz spaces $\mathcal{S}(\mathbb{R}^n)$, 
not just those from $\mathcal{K}_n$ (see Remark \ref{AH_nlevel_formula} for a formula for the $n$-point densities). 
In Appendix \ref{sec:Gaps} we calculate the probability distribution of gaps between points for the Alternative Hypothesis point process in a closed form.
These probabilities coincide with those of the  distribution of  gaps for the $\frac{1}{2}$-discrete sine  process of  Proposition \ref{discrete_sine_exists}.

\section{Point processes}
\label{sec:to_pointprocesses}

\subsection{Outline of approach}
Our approach to proving Theorem \ref{alternative_GUE} makes use of machinery from the theory of point processes. 
We use the theory of determinantal point processes to construct a point process supported on $\tfrac{1}{2}\mathbb{Z}$ but with correlation functions consistent with the sine-determinant in \eqref{eta_correlations} for band-limited test functions $\eta$ -- that we can do so depends upon some special properties specific to the determinant of the functions $S(x_i-x_j)$. We randomly translate our point process to obtain one that is translation invariant in $\mathbb{R}$, and finally (again making use of machinery from the theory of determinantal point processes) we will obtain a sequence $\{c_j\}$ as in the theorem by taking an instance of the point process at random and using the ergodic theorem to guarantee almost surely this sequence inherits correlation functions.

\subsection{Review  of point processes}
We recall the definition of a point process and their basic properties. A well-written modern introduction with the same conventions we follow can be found in \cite[Ch. 4]{AnGuZe10}. Other well-written and concrete introductions can be found in \cite[Ch. 1]{HoKrPeVi09}, \cite{Gr77}, or \cite[Chap.3]{Res87}.

Intuitively, a point process is a random element whose values are configurations of points in some set. More formally, let $\mathcal{X}$ be either\footnote{More generally $\mathcal{X}$ can be any locally compact Polish space.} $\mathbb{R}$ or $a\mathbb{Z} = \{...,-a,0,a,...\}$, either equipped with the usual topology. A \emph{configuration} $u$ in $\mathcal{X}$ is just another name for a sequence of elements of $\mathcal{X}$: $u = (u_j)_{j\in\mathbb{Z}}$ with $u_i \in \mathcal{X}$ for all $i \in \mathbb{Z}$, and we use the notation $\#_V(u) := \{i: u_i \in V\}$ to count the number of members of $u$ that lie in a set $V$. We let $\mathrm{Conf}(\mathcal{X})$ be the set of locally finite configurations
\begin{align*}
\mathrm{Conf}(\mathcal{X}) := \{u\; &\textrm{a configuration in}\; \mathcal{X}\; \textrm{such that}\\& \textrm{for any compact interval}\; K,\, \#_K(u) < \infty\}.
\end{align*}
We let $\mathfrak{M}$ be the smallest topology on $\mathrm{Conf}(\mathcal{X})$ to contain all cylindrical sets $C_n^V$, where
$$
C_n^V := \{u \in \mathrm{Conf}(\mathcal{X}):\; \#_V(u) = n\},
$$
where $n$ ranges over $0,1,2...$ and, in the case $\mathcal{X} = \mathbb{R}$, $V$ ranges over all open intervals. (If $\mathcal{X} = a\mathbb{Z}$, we can let $V$ range over all singleton sets $\{x\} \subset a\mathbb{Z}$.) It is known that $(\mathrm{Conf}(\mathcal{X}), \mathfrak{M})$ is a Polish space.

Let $\mathcal{B}(\mathfrak{M})$ be the Borel $\sigma$-algebra generated by $\mathfrak{M}$. A \emph{point process} is a random element $u$ taking values in $(\mathrm{Conf}(\mathcal{X}), \mathcal{B}(\mathfrak{M}))$. Our definition of $\mathcal{B}(\mathfrak{M})$ provides that the sets
$$
\{ u \in \mathrm{Conf}(\mathcal{X}):\, \#_{B_1}(u) = n_1,...,\#_{B_k}(u) = n_k\}
$$
are all measurable events, for any collection of Borel subsets $B_1,...,B_k$ of $\mathcal{X}$ and non-negative integers $n_1,...,n_k$.

Also measurable is the event
$$
\{u \in \mathrm{Conf}(\mathcal{X}): \exists x \in \mathcal{X}\; \textrm{such that}\; \#_{\{x\}}(u) \geq 1\}.
$$
This is the event -- said another way -- that not all points of the configuration are distinct. If this event has probability $0$, so that almost surely all points of the point process are distinct, the point process is said to be \emph{simple}.

A point process on $\mathbb{R}$ or $a\mathbb{Z}$ with configurations $(u_j)$ is said to be \emph{translation invariant} if for any constant $x$ in $\mathbb{R}$ or $a\mathbb{Z}$ respectively, the random configuration $(x+u_j)$ has the same distribution as the original random configuration $(u_j)$.

For each $n$ and $\varphi \in C_c(\mathbb{R}^n)$, the function
\begin{equation}
\label{correlation_rv}
\distinctsum \varphi(u_{j_1},...,u_{j_n})
\end{equation}
is a measurable mapping from $\mathrm{Conf}(\mathcal{X})$ to $\mathbb{C}$.

For a point process with configurations $(u_j)$, \eqref{correlation_rv} is a complex-valued random variable. If there exists a measurable function $\rho_n(x_1,...,x_n)$ such that
$$
\mathbb{E}\,\distinctsum \varphi(u_{j_1},...,u_{j_n}) = \int_{\mathbb{R}^n} \varphi(x) \rho_n(x)\, dx,
$$
for all $\varphi \in C_c(\mathbb{R}^n)$, then $\rho_n$ is said to be the $n$-point correlation function (or $n$-point joint intensity) of the process $(u_j)$. For a point process on $a\mathbb{Z}$ instead of $\mathbb{R}$, the same is said to be true, where the measure $d^nx$ is replaced by $a^n$ times the counting measure:
$$
\mathbb{E}\,\distinctsum \varphi(u_{j_1},...,u_{j_n}) = a^n\sum_{k \in (a\mathbb{Z})^n}  \varphi(k) \rho_n(k).
$$
In some rare cases, given a collection of correlation functions $\rho_1,\rho_2,...$ there may be more than one point process with these correlation functions. 
See the discussion in Section 1.2 of  \cite{HoKrPeVi09}, especially
Remark 1.2.4. for a sufficient condition for uniqueness.

\subsection{Construction of determinantal point processes}
There is a special class of correlation functions which will be particularly important to us. Correlation functions, and the process generating them, are said to be \emph{determinantal} if there exists a function $K(\cdot,\cdot):\, \mathcal{X} \times \mathcal{X}  \rightarrow \mathbb{C}$ such that $K \in L^2(D\times D)$ for any compact $D \subset \mathcal{X}$ and
\begin{equation}
\label{determinantal_def}
\rho_n(x_1,...,x_n) = \det_{1\leq i,j \leq n}\Big[ K(x_i,x_j)\Big].
\end{equation}
There is an extensive literature on such processes, which play a role in random matrix theory, cf. Soshnikov \cite{So00}.

It is known that for a determinantal point process the correlation functions uniquely define the point process.

\begin{lem}
\label{det_unique}
Given $K$ as above, there is at most one point process with correlation functions given by \eqref{determinantal_def} for all $n\geq 1$.
\end{lem}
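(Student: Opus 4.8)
The plan is to prove uniqueness by showing that the correlation functions $\rho_n$ determine the probability of every cylinder event $C_m^V = \{u : \#_V(u) = m\}$ for $V$ a bounded interval (or singleton, in the lattice case), since such events generate $\mathcal{B}(\mathfrak{M})$, and a finite measure on a Polish space is determined by its values on a generating $\pi$-system. So the real content is: recover $\mathbb{P}(\#_V(u) = m)$ from the data $(\rho_n)_{n \ge 1}$ on $V$.

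\medskip

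First I would fix a bounded Borel set $V$ with compact closure and consider the random variable $N = \#_V(u)$, the number of points in $V$. The factorial moments of $N$ are given by the correlation functions: $\mathbb{E}\big[ N(N-1)\cdots(N-k+1)\big] = \int_{V^k} \rho_k(x_1,\dots,x_k)\, d^kx$ (with counting measure in the lattice case), which follows by applying the defining relation of $\rho_k$ to test functions approximating $\mathbf{1}_{V^k}$ (a standard monotone-class / dominated convergence argument, which I would not belabour). For a \emph{determinantal} process with kernel $K$, these integrals are $\int_{V^k} \det[K(x_i,x_j)]\, d^kx$, and the key point is that this equals $k!$ times the $k$-th elementary symmetric function of the eigenvalues of the trace-class operator $K_V := \mathbf{1}_V K \mathbf{1}_V$ on $L^2(V)$; in particular these factorial moments grow slowly enough (the generating function $\sum_k \mathbb{E}[(N)_k] z^k/k! = \det(I + z K_V)$ is entire of order $\le 1$) that $N$ is determined by its moments — the moment problem for $N$ is determinate. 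Hence the law of $N = \#_V(u)$, i.e. all the probabilities $\mathbb{P}(\#_V(u) = m)$, is a function of the correlation functions alone.

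\medskip

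The same argument applies to the joint law of $(\#_{V_1}(u), \dots, \#_{V_r}(u))$ for finitely many bounded sets $V_1,\dots,V_r$: their joint factorial moments $\mathbb{E}\big[\prod_i (\#_{V_i})_{k_i}\big]$ are expressed through the $\rho_n$ by integrating over $V_1^{k_1}\times\cdots\times V_r^{k_r}$, and for a determinantal process the associated multivariate generating function is again $\det(I + \sum_i z_i \mathbf{1}_{V_i} K \mathbf{1}_{V_i})$ up to the usual bookkeeping, entire of order $\le 1$ in each variable, so the joint distribution is determined by its moments. This pins down the measure on the $\pi$-system of events $\{\#_{V_1}(u)=n_1,\dots,\#_{V_r}(u)=n_r\}$ generating $\mathcal{B}(\mathfrak{M})$, so two point processes with the same $(\rho_n)$ agree; that gives the claimed uniqueness.

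\medskip

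The main obstacle is the determinacy of the moment problem — i.e. justifying that the (joint) factorial moments really do determine the (joint) law of the counts, rather than merely constraining it. This is where the determinantal structure is essential: for a general point process the factorial moments can grow too fast, but for a determinantal kernel $K$ the bound $0 \le \det[K(x_i,x_j)] $ together with trace-class-ness of $K_V$ forces $\mathbb{E}[(N)_k] \le k! \,\|K_V\|_1^k / $ (something), so the moment generating function of $N$ has a positive radius of convergence and Carleman's condition (or analyticity of $z \mapsto \det(I+zK_V)$) applies. I would cite the standard fact that a determinantal kernel gives a trace-class operator on $L^2$ of a bounded set and that $\det(I+zK_V) = \sum_k z^k \,\mathrm{tr}(\wedge^k K_V)$ is entire, reducing the determinacy to this analyticity; alternatively one can cite Soshnikov \cite{So00} or \cite[Ch.~4]{AnGuZe10} directly for the statement that a determinantal process is uniquely determined by its kernel through its correlation functions, which is exactly Lemma \ref{det_unique}.
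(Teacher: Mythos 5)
Your argument is correct in substance, but it is worth noting how it relates to the paper: the paper's entire proof is a one-line citation of Lemma 4.2.6 of \cite{HoKrPeVi09}, and what you have written is essentially the standard proof of that cited lemma — correlation functions give the (joint) factorial moments of the counts $\#_{V}(u)$, the determinantal structure keeps those moments from growing too fast so the moment problem for the counts is determinate, and the counting events on bounded sets form a generating $\pi$-system, so the law of the process is pinned down. Two caveats on your execution. First, your route through the eigenvalues of $K_V=\mathbf{1}_V K\mathbf{1}_V$ (so that $\int_{V^k}\det[K(x_i,x_j)]\,d^kx=k!\,e_k(\lambda_1,\lambda_2,\dots)$ and $\det(I+zK_V)$ is entire) tacitly assumes $K$ is Hermitian, nonnegative and locally trace class; the paper's definition of a determinantal process only assumes $K\in L^2(D\times D)$ locally. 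For the kernels actually used in the paper (the sine and $a$-discrete sine kernels) your hypotheses do hold, as verified via Macchi's theorem in the appendix, but in the stated generality one should instead bound the factorial moments directly by Hadamard's inequality, $\bigl|\det_{1\le i,j\le k}[K(x_i,x_j)]\bigr|\le k^{k/2}C^k$ on $V^k$, which still gives $\bigl(\tfrac{1}{k!}\int_{V^k}\rho_k\bigr)^{1/k}\to 0$ and hence determinacy by the criterion in \cite[Remark 1.2.4]{HoKrPeVi09}. Second, the multivariate generating function is $\det\bigl(I+\sum_i (z_i-1)\mathbf{1}_{V_i}K\mathbf{1}_{V_i}\bigr)$ for \emph{disjoint} $V_i$, not quite what you wrote; but since your argument only needs growth control on the joint factorial moments (which are integrals of the $\rho_n$ over products of the $V_i$), this is cosmetic. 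With those repairs your proof stands on its own, whereas the paper simply delegates it; citing \cite{HoKrPeVi09} or \cite{So00}, as you suggest at the end, is exactly what the authors do.
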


\begin{proof}
This is shown in  Lemma 4.2.6 of \cite{HoKrPeVi09}.
\end{proof}

Given a collection of functions $\rho_1, \rho_2, ...$ it can be difficult to decide whether there exists a point process for which these are the correlation functions. In the case of determinantal correlation functions however an exact characterization was given by Macchi \cite{Ma75}. We will discuss this result in Appendix \ref{subsec:A1}. Macchi's result can be used to demonstrate the existence of point processes with the following correlation functions:

\begin{prop}[Sine process]
\label{sine_exists}
There exists a point process on $\mathbb{R}$ with configurations $(v_j)$ such that
\begin{equation}
\label{sine_correlations}
\mathbb{E} \distinctsum \varphi(v_{j_1},...,v_{j_k}) = \int_{\mathbb{R}^n} \varphi(x)\, \det_{1\leq i, j \leq n}\Big[ S(x_i-x_j)\Big]\, d^nx,
\end{equation}
for all $n \geq 1$ and all $\varphi \in C_c(\mathbb{R}^n)$. We refer to this point process as the \textbf{sine process}.
\end{prop}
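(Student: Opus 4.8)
The plan is to deduce Proposition \ref{sine_exists} from Macchi's existence criterion, which (as the paper indicates) characterizes exactly when a kernel $K$ gives rise to a determinantal point process. The relevant kernel here is the \emph{sine kernel} $K(x,y) = S(x-y) = \tfrac{\sin \pi(x-y)}{\pi(x-y)}$ on $\mathcal{X} = \mathbb{R}$, and the task reduces to verifying the hypotheses of Macchi's theorem for this particular $K$. First I would recall that Macchi's criterion asserts existence of a determinantal point process with kernel $K$ provided that $K$, viewed as an integral operator on $L^2$ of any compact set $D$, is self-adjoint, locally trace class, and has spectrum contained in $[0,1]$. So the entire proof is the checking of these three properties.

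The key computational input is that $S(x-y)$ is the integral kernel of the orthogonal projection $P$ onto the space of $L^2(\mathbb{R})$ functions whose Fourier transform is supported in the interval $[-\tfrac12, \tfrac12]$ (equivalently, via Plancherel, the projection onto bandlimited functions). This is a standard fact: if $\hat{f}$ is supported in $[-\tfrac12,\tfrac12]$ then $f(x) = \int_{-1/2}^{1/2} \hat{f}(\xi) e^{2\pi i x \xi}\, d\xi$, and one checks $S(x-y) = \int_{-1/2}^{1/2} e^{2\pi i (x-y)\xi}\, d\xi$, so convolution against $S$ implements this Fourier truncation. Since $P$ is an orthogonal projection on $L^2(\mathbb{R})$, it is self-adjoint with $P^2 = P$, hence its spectrum lies in $\{0,1\} \subset [0,1]$; self-adjointness and the spectral bound are immediate. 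For the local trace-class condition, I would restrict to $L^2(D)$ for $D$ compact, i.e. consider $\mathbf{1}_D P \mathbf{1}_D$; this operator has kernel $S(x-y)$ for $x,y \in D$, it is bounded (indeed a contraction, being a compression of a projection), and its trace is $\int_D S(0)\, dx = \int_D 1\, dx = |D| < \infty$, while positivity of the compression $\mathbf{1}_D P \mathbf{1}_D = (\mathbf{1}_D P)(\mathbf{1}_D P)^* \geq 0$ gives that it is trace class with trace equal to $|D|$. One should also note $S \in L^2(D \times D)$ since $S$ is bounded and $D \times D$ has finite measure, so the operator is Hilbert--Schmidt, which combined with positivity and the finite trace gives trace class.

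With those verifications in hand, Macchi's theorem produces a point process on $\mathbb{R}$ whose $n$-point correlation functions are $\det_{1 \le i,j \le n}[S(x_i - x_j)]$; unwinding the definition of correlation functions given in the text immediately yields \eqref{sine_correlations}. I would close by remarking that uniqueness of this process follows from Lemma \ref{det_unique}.

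The main obstacle, such as it is, is not conceptual but bookkeeping: one must be careful that Macchi's criterion is being applied in the locally-trace-class (non-compact $\mathcal{X}$) formulation rather than the finite-volume one, since the sine kernel is only \emph{locally} trace class and not globally so (the full projection $P$ is not trace class — its trace would be infinite). The identification of $S$ with the Fourier-truncation projection and the passage through compressions $\mathbf{1}_D P \mathbf{1}_D$ is what makes the local hypotheses come out cleanly, and is the one place where a slip would be easy. Everything else — self-adjointness, the $[0,1]$ spectral bound, finiteness of the local trace — falls out of the projection property with essentially no work. Since the paper defers the precise statement of Macchi's theorem to Appendix \ref{subsec:A1}, I would phrase the proof as: exhibit $K = S$ as the kernel of the bandlimiting projection, observe it satisfies the hypotheses recalled there, and invoke the theorem.
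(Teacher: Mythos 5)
Your argument is correct, but it is not the route the paper takes for this particular proposition: the paper simply cites Anderson--Guionnet--Zeitouni (Sec.\ 4.2) for the existence of the sine process, and reserves the Macchi-criterion argument for the \emph{discrete} sine process in Appendix \ref{sec:A}. What you have written is essentially that appendix proof transplanted from $\ell^2(a\mathbb{Z})$ to $L^2(\mathbb{R})$: identify $S(x-y)$ as the kernel of the Fourier-truncation projection $P$ onto frequencies in $[-\tfrac12,\tfrac12]$, so self-adjointness and $0 \leq \mathbb{K} \leq I$ are immediate, then verify the local trace-class hypothesis and invoke Macchi's theorem --- correctly noting that it must be used in its locally compact Polish (locally trace class) form, since the version displayed as Theorem \ref{Macchi} is stated for $a\mathbb{Z}$ with the general case only in a footnote. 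The one loose point in your write-up is the trace-class step: being Hilbert--Schmidt on $L^2(D)$ together with a finite diagonal integral does not by itself yield trace class, and asserting ``the trace is $\int_D S(0)\,dx$'' presupposes what is to be proved. The clean repair, which you essentially have in hand, is the factorization $\mathbf{1}_D P \mathbf{1}_D = (\mathbf{1}_D P)(\mathbf{1}_D P)^*$ combined with the observation that $\mathbf{1}_D P$ is Hilbert--Schmidt because its kernel $\mathbf{1}_D(x)S(x-y)$ lies in $L^2(D\times\mathbb{R})$ (as $S \in L^2(\mathbb{R})$); a product of two Hilbert--Schmidt operators is trace class, and only then does the trace equal $\int_D S(0)\,dx = |D|$ (alternatively, quote the continuous analogue of Lemma \ref{traceclass} for non-negative operators with continuous kernels). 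With that repair your proof is complete and self-contained, and it buys something the paper's citation does not: a uniform treatment of the continuous and discrete sine processes by one and the same projection argument.
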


The sine process  is very well-known and a proof of its existence appears in \cite[Sec. 4.2]{AnGuZe10}.
\begin{prop}[Discrete sine process]
\label{discrete_sine_exists}
For each $0 < a \leq 1$ there exists a point process supported on $a\mathbb{Z}$ with configurations $(w_j)$ such that
\begin{equation}
\label{discrete_sine_correlations}
\mathbb{E} \distinctsum \varphi(w_{j_1},...,w_{j_k}) = a^n \sum_{k \in (a\mathbb{Z})^n} \varphi(k)\,\det_{1\leq i, j \leq n}\Big[ S(k_i-k_j)\Big]
\end{equation}
for all $n \geq 1$ and all $\varphi \in C_c(\mathbb{R}^n)$. We refer to this point process as the a-\textbf{discrete sine process}.
\end{prop}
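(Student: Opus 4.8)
The plan is to realize the desired process as a determinantal point process on the lattice $a\mathbb{Z}$, equipped with the reference measure $\mu = a\cdot(\text{counting measure})$, whose correlation kernel is $K(x,y) = S(x-y)$, and then to invoke Macchi's existence theorem \cite{Ma75} (discussed in Appendix \ref{subsec:A1}). Concretely, let $\mathcal{K}$ be the operator on $L^2(a\mathbb{Z},\mu)$ given by $(\mathcal{K}f)(x) = a\sum_{y\in a\mathbb{Z}} S(x-y)\, f(y)$. Since $S$ is real-valued and even, $\mathcal{K}$ is Hermitian; and since $a\mathbb{Z}$ is discrete, the compression $\mathbf{1}_D\,\mathcal{K}\,\mathbf{1}_D$ to any finite $D\subset a\mathbb{Z}$ is a finite matrix, so $\mathcal{K}$ is automatically locally trace class. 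Thus, exactly as in the proof of Proposition \ref{sine_exists} for the continuous sine process, the only thing that needs checking is the operator inequality $0 \preceq \mathcal{K} \preceq I$; granting this, Macchi's theorem produces a point process supported on $a\mathbb{Z}$ with $\rho_n(x_1,\dots,x_n) = \det_{1\le i,j\le n}[S(x_i - x_j)]$, which unwinds, via the definition of correlation functions against $\mu$, to precisely \eqref{discrete_sine_correlations}; uniqueness is then Lemma \ref{det_unique}.

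To verify $0\preceq\mathcal{K}\preceq I$ I would diagonalize $\mathcal{K}$ using Fourier analysis on the lattice. The operator $\mathcal{K}$ is a discrete convolution operator, so under the lattice Fourier transform, which is a unitary $L^2(a\mathbb{Z},\mu) \to L^2(\mathbb{R}/\tfrac{1}{a}\mathbb{Z})$, it becomes multiplication by the symbol $m(\xi) = \sum_{k\in\mathbb{Z}} \widehat{S}(\xi + k/a)$, the $\tfrac{1}{a}$-periodization of $\widehat{S}$ (a Poisson summation computation). Since $\widehat{S} = \mathbf{1}_{[-1/2,1/2]}$ has support of length $1$ and the hypothesis $0 < a \le 1$ forces the period $1/a$ to be $\ge 1$, the translates $[-1/2,1/2] + k/a$ are pairwise disjoint up to a set of measure zero, so $m(\xi) \in \{0,1\}$ for almost every $\xi$. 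Hence $\langle \mathcal{K}f, f\rangle = \int m(\xi)\,|\widehat{f}(\xi)|^2\, d\xi$ lies between $0$ and $\|f\|^2$, which is exactly the required inequality. (For $a=1$ one gets $m \equiv 1$ a.e., hence $\mathcal{K} = I$ and the deterministic configuration $\mathbb{Z}$, consistent with the fact that $S(k_i-k_j) = \delta_{k_i,k_j}$ on the integers.)

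The only genuinely substantive point is this last Fourier-side observation, and in particular the role of the hypothesis $a \le 1$: if $a>1$ the periodization $m$ would exceed $1$ on a set of positive measure, the candidate $\rho_n$ would fail to be a genuine correlation function, and no such process would exist. Everything else — the Hermitian and local-trace-class properties, the passage from the operator $\mathcal{K}$ to the process, and uniqueness — is standard determinantal machinery (cf. \cite{So00}, \cite[Ch. 4]{HoKrPeVi09}) that is already available once Macchi's theorem is in hand from Appendix \ref{subsec:A1}. I expect the main care in the write-up to go into pinning down the normalization of the lattice Fourier transform and the Plancherel identity on $L^2(a\mathbb{Z},\mu)$, so that the identity $\langle\mathcal{K}f,f\rangle = \int m\,|\widehat{f}|^2$ holds with the correct constants.
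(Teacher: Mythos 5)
Your proposal is correct and follows essentially the same route as the paper: apply Macchi's theorem to the convolution operator $\mathbb{K}f(j)=a\sum_{k\in a\mathbb{Z}}S(j-k)f(k)$ on $\ell^2(a\mathbb{Z})$, diagonalize it by the lattice Fourier transform, and use that for $0<a\le 1$ the symbol is almost everywhere the indicator of $[-\tfrac12,\tfrac12]$ (no aliasing), so $\mathbb{K}$ is a projection and $0\le\mathbb{K}\le I$, with the locally trace class condition immediate on a discrete space. The paper encodes your periodization step as the identity $S(x)=\int_{-1/(2a)}^{1/(2a)}\mathbf{1}_{[-\tfrac12,\tfrac12]}(\theta)e^{2\pi i x\theta}\,d\theta$, valid precisely for $0<a\le 1$, which is the same fact in equivalent form.
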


The discrete sine process is less well-known, but it has also been made use of in e.g. \cite{BoOkOl00, Wi72, Wi94}. 
A proof of the existence for the discrete sine process is similar to that for the sine process, but does not seem to be readily accessible in the literature;  we provide one in Appendix \ref{sec:A}. The bound  $0 \le a \le 1$ is sharp: a  process  supported on $a\mathbb{Z}$ with correlation functions  \eqref{discrete_sine_correlations} does not exist for $a >1$, see Remark \ref{rmk:non-existence}.

It is well known that the sine process is simple. A similar  result holds  for the discrete sine-process; it is important
for our results and we include a  proof here for convenience.

\begin{prop}
\label{prop:simple}
For any $0< a \leq 1$, the $a$-discrete sine process is simple.
\end{prop}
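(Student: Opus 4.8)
The plan is a short second-moment argument exploiting the degeneracy of the sine kernel on the diagonal. First I would fix a point $x \in a\mathbb{Z}$ and set $m_x := \#_{\{x\}}(w)$, the number of indices $j$ with $w_j = x$; since $\{x\}$ is a compact set, local finiteness of configurations already gives $m_x < \infty$ almost surely. Because $a\mathbb{Z}$ is discrete I can choose $\psi \in C_c(\mathbb{R})$ with $\psi(x) = 1$ and $\psi \equiv 0$ on $a\mathbb{Z}\setminus\{x\}$. Applying the defining identity \eqref{discrete_sine_correlations} with $n = 2$ and test function $\varphi(y_1,y_2) = \psi(y_1)\psi(y_2)$, the left-hand side becomes $\mathbb{E}\sum_{j_1 \ne j_2}\mathbf{1}[w_{j_1}=x]\,\mathbf{1}[w_{j_2}=x] = \mathbb{E}\big[m_x(m_x-1)\big]$, while on the right only the single lattice point $(x,x)$ survives, contributing $a^2 \det_{1\le i,j\le 2}\big[S(x_i-x_j)\big] = a^2\big(S(0)^2 - S(0)^2\big) = 0$ since there $x_1 = x_2 = x$. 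Hence $\mathbb{E}\big[m_x(m_x-1)\big] = 0$.

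Since $m_x(m_x-1)$ is a non-negative integer-valued random variable, it must vanish almost surely, i.e. $m_x \in \{0,1\}$ with probability one. The event that the process is not simple is $\{w : \exists\, x \in a\mathbb{Z},\ \#_{\{x\}}(w) \ge 2\} = \bigcup_{x \in a\mathbb{Z}} \{m_x \ge 2\}$, a countable union, so by the union bound it has probability at most $\sum_{x \in a\mathbb{Z}} \mathbb{P}(m_x \ge 2) = 0$. This gives the claim.

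No step here is genuinely difficult; the only point requiring a little care is passing from the test-function identity \eqref{discrete_sine_correlations}, stated for $\varphi \in C_c(\mathbb{R}^2)$, to the functional $m_x(m_x-1)$, and on a discrete lattice this is immediate via the choice of $\psi$ above (a continuous compactly supported function that agrees with $\mathbf{1}_{\{x\}}$ on all of $a\mathbb{Z}$). The substantive input is the purely algebraic observation that every entry of the $2\times 2$ matrix $\big[S(x_i - x_j)\big]$ equals $S(0) = 1$ when $x_1 = x_2$, so that $\rho_2(x,x) = 0$ — the same mechanism responsible for the simplicity of the continuous sine process.
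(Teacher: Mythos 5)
Your proposal is correct and follows essentially the same route as the paper: apply the $n=2$ correlation identity with a test function that localizes at a single lattice point, observe that the $2\times 2$ sine determinant vanishes on the diagonal so that $\mathbb{E}\big[\#_{\{x\}}(w)(\#_{\{x\}}(w)-1)\big]=0$, and conclude via a countable union over $x \in a\mathbb{Z}$. The only differences are cosmetic (a product test function $\psi(y_1)\psi(y_2)$ instead of the paper's choice depending on $y_1-y_2$, and an explicit union bound that the paper leaves implicit).
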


\begin{proof}
For an element $x \in a\mathbb{Z}$, consider \eqref{discrete_sine_correlations} for $n=2$. 
There exists\footnote{Take $\varphi(x_1, x_2) = \varphi_1(x_1-x_2)$ with a bump function $\varphi_1(x) \in C_c^\infty(\mathbb{R})$ supported inside $(-a, a)$ with $\varphi_1(0) =1$.}
a  function $\varphi(x_1, x_2) \in C_c(\RR^2)$ which when restricted to $(a\mathbb{Z})^2$ has  $\varphi(k_1,k_2) = \mathbf{1}_{(x,x)}(k_1,k_2)$. 
We have
$$
\mathbb{E} \sum_{\substack{j_1,j_2 \in \mathbb{Z}\\ \mathrm{distinct}}} \mathbf{1}_{(x,x)}(w_{j_1}, w_{j_2})= \mathbb{E}\, [\#_{\{x\}}(w) ][\#_{\{x\}}(w)-1].
$$
On the other hand, by   \eqref{discrete_sine_correlations} we have 
\begin{eqnarray*}
 \mathbb{E} \sum_{\substack{j_1,j_2 \in \mathbb{Z}\\ \mathrm{distinct}}} \mathbf{1}_{(x,x)}(w_{j_1}, w_{j_2}) 
&=& a^2 \sum_{ (k_i, k_j) \in (a\mathbb{Z})^2} \varphi(k_i, k_j) \det \begin{pmatrix} S(k_i- k_i) & S(k_i-k_j) \\ S(k_i- k_j) & S(k_j-k_j) \end{pmatrix}\\
&=& a^2 \sum_{ k_i \in a\mathbb{Z} } \varphi(k_i, k_i) \det \begin{pmatrix} S(k_i-k_i) & S(k_i-k_i) \\ S(k_i- k_i) & S(k_i-k_i) \end{pmatrix} = 0.
\end{eqnarray*}
Hence for any $x$ almost surely $\#_{\{x\}}(w) = 0\textrm{ or }1$, and so almost surely all points of the configuration $(w)$ are distinct.
\end{proof}

It is  well-known that the sine process is 
invariant under translations in $\mathbb{R}$. We show that the  discrete sine-process is invariant under  translations in $a \mathbb{Z}$.

\begin{prop}
\label{prop:translation_invariant}
For any $0 < a \leq 1$, the $a$-discrete sine process is translation invariant, for translations in $a \mathbb{Z}$.
\end{prop}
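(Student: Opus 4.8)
The plan is to reduce translation invariance to the defining property of a determinantal process, namely uniqueness of the process given its correlation functions (Lemma \ref{det_unique}). Fix $a \in (0,1]$ and let $(w_j)$ denote a configuration sampled from the $a$-discrete sine process, whose existence is guaranteed by Proposition \ref{discrete_sine_exists}. For a fixed $y \in a\mathbb{Z}$, consider the shifted process with configurations $(y + w_j)$; it is again a point process supported on $a\mathbb{Z}$, so it suffices to compute its correlation functions and check that they agree with those in \eqref{discrete_sine_correlations}. Once we know the shifted process has exactly the same correlation functions, Lemma \ref{det_unique} forces it to have the same law, which is precisely translation invariance by $y$.

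First I would record that if $(w_j)$ has correlation functions $\rho_n(k_1,\dots,k_n) = \det_{1\le i,j\le n}[S(k_i-k_j)]$ with respect to $a^n$ times counting measure on $(a\mathbb{Z})^n$, then for any $\varphi \in C_c(\mathbb{R}^n)$,
$$
\mathbb{E} \distinctsum \varphi(y + w_{j_1},\dots,y+w_{j_n}) = \mathbb{E} \distinctsum \psi(w_{j_1},\dots,w_{j_n}),
$$
where $\psi(x_1,\dots,x_n) := \varphi(y+x_1,\dots,y+x_n)$ is still in $C_c(\mathbb{R}^n)$. Applying the correlation identity \eqref{discrete_sine_correlations} to $\psi$ and then substituting $k_i \mapsto k_i - y$ in each sum over $a\mathbb{Z}$ (a legitimate bijection of $a\mathbb{Z}$ since $y \in a\mathbb{Z}$), this becomes
$$
a^n \sum_{k \in (a\mathbb{Z})^n} \varphi(k)\, \det_{1\le i,j\le n}\big[ S\big((k_i - y) - (k_j - y)\big)\big] = a^n \sum_{k \in (a\mathbb{Z})^n} \varphi(k)\, \det_{1\le i,j\le n}\big[ S(k_i - k_j)\big].
$$
Thus the shifted process has the same $n$-point correlation functions as the original one, for every $n \ge 1$.

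The one point that needs a word of care is the appeal to Lemma \ref{det_unique}: that lemma is stated for determinantal processes on a space $\mathcal{X}$, and here $\mathcal{X} = a\mathbb{Z}$ with the kernel $K(k,k') = S(k-k')$, which indeed lies in $L^2(D\times D)$ for compact $D \subset a\mathbb{Z}$ since such $D$ is finite. Both $(w_j)$ and $(y+w_j)$ are point processes with these determinantal correlation functions, so uniqueness gives that they are equal in distribution. I do not expect any real obstacle here — the translation invariance of the kernel $S(k_i-k_j)$ under the simultaneous shift of all arguments is what makes everything go through — the only mild subtlety is making sure the reindexing of the lattice sums is valid, which it is precisely because we translate by an element of $a\mathbb{Z}$ (and this is exactly why one cannot expect invariance under arbitrary real translations for the discrete process).
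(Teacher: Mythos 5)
Your proposal is correct and follows essentially the same route as the paper: both show that the shifted process has the same determinantal correlation functions, using the invariance of $\det_{1\leq i,j\leq n}[S(k_i-k_j)]$ under simultaneous shifts, and then conclude via the uniqueness in Lemma \ref{det_unique}. Your write-up just spells out the reindexing of the lattice sums a bit more explicitly than the paper does.
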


\begin{proof}
The functions $\det_{1\leq i, j \leq n}[ S(k_i-k_j)]$ are invariant under real shifts  $(k_1,...,k_n) \mapsto (k_1+x,...,k_n+x)$.
It follows immediately that for any constant $x \in a\mathbb{Z}$ the $a$-discrete sine  process with configurations $(w_j)$ 
and a new process with configurations  $(w_j+x)$ has  the same correlation functions for all $n \ge 1$. Now Lemma \ref{det_unique}  implies that the 
new point process must also be the $a$-discrete sine process.
\end{proof}

We will need  to know the correlation function formula  \eqref{discrete_sine_correlations} holds for a slightly wider class of functions than $C_c((a\mathbb{Z})^n)$. In fact this formula is valid for all functions in $\ell^1\big((a\mathbb{Z})^n\big)$. Because the the discrete sine process is simple, for $\varphi \in \ell^1\big((a\mathbb{Z})^n\big)$, almost surely we have
$$
\distinctsum |\varphi(w_{j_1},...,w_{j_k})| \leq \sum_{\substack{k_1,...,k_n \in a\mathbb{Z} \\ \mathrm{distinct}}} |\varphi(k_1,...,k_n)| = ||\varphi ||_{1}. 
$$
The function $\det_{1\leq i, j \leq n}[S(x_i-x_j)]$ is bounded, so it is easy to see using monotone convergence that \eqref{discrete_sine_correlations} holds for $\varphi$ which are positive and summable. By decomposing general $\varphi$ into positive and negative parts, it follows that \eqref{discrete_sine_correlations} holds for all $\varphi \in \ell^1\big((a\mathbb{Z})^n\big)$. In particular, for our purposes,  \eqref{discrete_sine_correlations} holds for all $\varphi \in \mathcal{S}(\mathbb{R}^n)$. (The same  is true for the sine process for $\sS(\mathbb{R}^n)$.)

Note that the sine process has the same correlation functions as the GUE Hypothesis predicts for rescaled zeta zeros. In fact, the GUE Hypothesis can be restated in the following way: for each $T$ define a point process $\mathcal{X}_T$ with configurations\footnote{ $\mathcal{X}_T$ is not translation-invariant. We obtain translation-invariance only in the limit $T \to \infty$.} 
$$
(\widetilde{\gamma}_j - t)_{j\in\mathbb{Z}}
$$
with $t$ is chosen randomly and uniformly from $[T,2T]$. The GUE Hypothesis is the claim that these point processes $\mathcal{X}_T$ tend in distribution to the sine-kernel process as $T\rightarrow\infty$.

%
%
\subsection{Band-limited agreement}
We now demonstrate that the correlation functions of the discrete sine process agree with those of the sine process itself, when tested against a sufficiently band-limited class of test functions.

\begin{lem}
\label{sine_reconstruction_leq_1/2}
Let $(w_j)$ be the configurations of the discrete sine process on $a\mathbb{Z}$
with parameter $0< a \leq \frac{1}{2}$. Then for any $n\geq 1$ and any 
$\eta \in \mathcal{S}(\mathbb{R}^n)$ such that
\begin{equation}
\label{fourier_support}
\supp\; \hat{\eta} \subset [-\tfrac{1}{2a}, \tfrac{1}{2a}]^n,
\end{equation}
we have
\begin{equation}
\label{bandlimited_match}
\mathbb{E}\distinctsum \eta(w_{j_1},...,w_{j_n}) = \int_{\mathbb{R}^n} \eta(x) \det_{1 \leq i,j \leq n}\Big[ S(x_i-x_j)\Big]\, d^nx.
\end{equation} 
\end{lem}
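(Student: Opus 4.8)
The plan is to reduce the identity \eqref{bandlimited_match} to the Poisson summation formula. The right-hand side of \eqref{discrete_sine_correlations}, written out for a general $\varphi \in \ell^1((a\mathbb{Z})^n)$ (which by the extension discussion preceding the lemma includes all $\eta \in \mathcal{S}(\mathbb{R}^n)$), is the discrete sum $a^n \sum_{k \in (a\mathbb{Z})^n} \eta(k)\, F(k)$, where $F(x) := \det_{1\le i,j\le n}[S(x_i - x_j)]$. The target right-hand side of \eqref{bandlimited_match} is the continuous integral $\int_{\mathbb{R}^n} \eta(x) F(x)\, d^n x$. So the assertion is precisely that, under the Fourier support hypothesis \eqref{fourier_support}, the Riemann-sum-like discretization of $\int \eta F$ on the lattice $(a\mathbb{Z})^n$ reproduces the integral exactly. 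This is a statement about sampling a band-limited function, and Poisson summation is the natural tool.

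The key steps, in order, are as follows. First I would compute the Fourier transform of $F$. Since $S = \widehat{\mathbf{1}_{[-1/2,1/2]}}$ (with the convention $\hat f(\xi) = \int f(x) e^{-2\pi i x\xi}dx$, so that $\widehat{\mathbf 1_{[-1/2,1/2]}}(x) = S(x)$), expanding the determinant $F(x) = \sum_{\pi \in \mathfrak{S}_n} \sgn(\pi) \prod_i S(x_i - x_{\pi(i)})$ and taking Fourier transforms turns each product into a convolution; the upshot is that $\widehat F$ is supported in the cube $[-\tfrac12,\tfrac12]^n$ (each variable $\xi_i$ appears in $\widehat F$ only through factors $\mathbf 1_{[-1/2,1/2]}$, up to the permutation bookkeeping). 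This is the special property of the sine-determinant alluded to in the introduction. Second, I would apply the Poisson summation formula on the lattice $(a\mathbb{Z})^n$ to the product $G := \eta \cdot F$:
$$
a^n \sum_{k \in (a\mathbb{Z})^n} G(k) = \sum_{m \in (a^{-1}\mathbb{Z})^n} \widehat{G}(m).
$$
Third, I would observe that $\widehat G = \hat\eta * \widehat F$, and that $\supp \hat\eta \subset [-\tfrac{1}{2a},\tfrac{1}{2a}]^n$ together with $\supp \widehat F \subset [-\tfrac12,\tfrac12]^n$ forces $\supp \widehat G \subset [-\tfrac{1}{2a}-\tfrac12, \tfrac{1}{2a}+\tfrac12]^n \subset (-\tfrac1a,\tfrac1a)^n$, using $a \le \tfrac12$ so that $\tfrac{1}{2a}+\tfrac12 \le \tfrac1a$ with strict inequality unless $a = \tfrac12$ — I would need to check the boundary case $a=\tfrac12$ is still fine, i.e.\ that $\widehat G$ vanishes at the boundary points of $[-\tfrac1a,\tfrac1a]^n$, which holds because $\widehat F$, being a continuous function supported in the \emph{closed} cube but vanishing on its boundary (the sinc factors vanish at $\pm\tfrac12$ only in a measure-theoretic sense — more carefully, $\widehat F$ as an $L^1$ or $L^2$ function has no mass at the boundary), contributes nothing from lattice points $m$ with some $|m_i| = \tfrac1a$. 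Consequently the only lattice point $m \in (a^{-1}\mathbb{Z})^n$ inside the support of $\widehat G$ is $m = 0$, so the right side of the Poisson formula collapses to $\widehat G(0) = \int_{\mathbb{R}^n} \eta(x) F(x)\, d^n x$, which is exactly \eqref{bandlimited_match}.

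The main obstacle I anticipate is making the support statement for $\widehat F$ and the boundary case $a = \tfrac12$ fully rigorous: one must be careful that $\widehat F$ is genuinely supported in the closed cube $[-\tfrac12,\tfrac12]^n$ and, for the endpoint $a = \tfrac12$, that no spurious contribution arises from lattice frequencies landing exactly on the boundary of the dilated cube. Handling this cleanly may require working with $\widehat F$ explicitly as a sum over permutations of indicator-function convolutions and checking that each summand is supported in the open cube away from a null set, or invoking the fact that the relevant Poisson summation identity holds for $G \in \mathcal S(\mathbb R^n)$ with $\widehat G$ continuous, so that pointwise values of $\widehat G$ at individual lattice points are well-defined and, by the convolution support bound plus continuity, equal to $0$ for all $m \ne 0$. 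A secondary technical point is justifying the termwise Fourier transform of the determinant expansion and the convergence of the Poisson summation formula, but since $\eta \in \mathcal S(\mathbb R^n)$ and $F$ is bounded and smooth, $G \in \mathcal S(\mathbb R^n)$ and these are standard.
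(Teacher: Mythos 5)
Your overall strategy --- compute the Fourier support of the sine-determinant, observe that multiplying by $\eta$ only enlarges the support by convolution, and then kill all nonzero dual-lattice terms in Poisson summation over $(a\mathbb{Z})^n$ --- is exactly the paper's argument. However, your key support claim is wrong: the Fourier transform of $F(x)=\det_{1\le i,j\le n}[S(x_i-x_j)]$ is \emph{not} supported in $[-\tfrac12,\tfrac12]^n$, but only in $[-1,1]^n$. Expanding the determinant, the term for a permutation $\sigma$ is $\int_{[-1/2,1/2]^n} e\big(\sum_i x_i(\xi_i-\xi_{\sigma^{-1}(i)})\big)\,d^n\xi$, so the frequency dual to $x_i$ is the \emph{difference} $\xi_i-\xi_{\sigma^{-1}(i)}$ of two numbers in $[-\tfrac12,\tfrac12]$, which ranges over $[-1,1]$. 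Concretely, for $n=2$ one has $F(x_1,x_2)=1-S(x_1-x_2)^2$, and $\widehat{S^2}$ is the triangle function supported on all of $[-1,1]$, so $\widehat F$ has mass along the antidiagonal out to frequency $1$. Your claim matters: with support $[-\tfrac12,\tfrac12]^n$ your convolution bound would "prove" the lemma for every $a\le 1$, which is false --- the paper notes that band-limited agreement ceases for $a>1/2$ --- so the error is precisely at the quantity that determines the threshold $a\le\tfrac12$, not harmless bookkeeping.

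The fix is the paper's version of your argument: with $\supp\widehat F\subset[-1,1]^n$ one gets $\supp\widehat{(\eta F)}\subset[-1-\tfrac1{2a},\,1+\tfrac1{2a}]^n$, and a nonzero point of $(a^{-1}\mathbb{Z})^n$ has some coordinate of absolute value at least $\tfrac1a=\tfrac1{2a}+\tfrac1{2a}\ge 1+\tfrac1{2a}$ for $a\le\tfrac12$, with equality only at $a=\tfrac12$. The endpoint case must then be handled through $\hat\eta$, not through $\widehat F$ as you suggest: since $\hat\eta$ is continuous and supported in the closed cube $[-\tfrac1{2a},\tfrac1{2a}]^n$, it vanishes on the boundary of that cube, so for any $y$ outside the open cube $(-1-\tfrac1{2a},1+\tfrac1{2a})^n$ the modulated function $e(-x\cdot y)\eta(x)$ has Fourier transform $\hat\eta(\cdot+y)$ vanishing on $[-1,1]^n$, whence $\widehat{\eta F}(y)=0$ and only the term $m=0$ survives in Poisson summation, giving \eqref{sum_to_integral} and hence \eqref{bandlimited_match}. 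Your appeal to $\widehat F$ "having no mass at the boundary as an $L^1$ or $L^2$ function" does not work as stated: $\widehat F$ is a tempered distribution with delta-type components (already for $n=2$ it contains $\delta_0\otimes\delta_0$ and a delta along $\xi_1+\xi_2=0$), so it is neither an $L^1$ nor $L^2$ function, and in any case the boundary vanishing needed at $a=\tfrac12$ comes from $\hat\eta$.
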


\noindent \textbf{Remark:}  For Theorem \ref{alternative_GUE}, we need this result only for $a=1/2$.

\begin{proof}
By Proposition \ref{discrete_sine_exists}, verifying  \eqref{bandlimited_match} is equivalent to  verifying for  all $0 < a \le 1/2$ and $\eta \in \mathcal{S}(\mathbb{R}^n)$
with $\supp \; \hat{\eta} \subset [-\tfrac{1}{2a}, \tfrac{1}{2a}]^n$
 that
\begin{equation}
\label{sum_to_integral}
a^n \sum_{k \in (a\mathbb{Z})^n} \eta(k) \det_{1\leq i,j \leq n}\Big[ S(k_i-k_j)\Big] = \int_{\mathbb{R}^n} \eta(x) \det_{1\leq i, j \leq n}\Big[S(x_i-x_j)\Big]\, d^n x.
\end{equation}
The identity \eqref{sum_to_integral} says that
the $a$-discrete sine process and the sine process have perfectly matching $n$-point correlation statistics for all the  bandlimited test functions $\eta$
in the range specified above.

We prove  \eqref{sum_to_integral}   in two steps. First we show that the sine-determinant has no frequencies outside $[-1,1]^n$. 
Secondly this  implies that for band-limited $\eta$ the function $\eta(x)\det\big[S(x_i-x_j)\big]$ has no frequencies outside of $(-1-\tfrac{1}{2a}, 1 + \tfrac{1}{2a})^n$. 
This allows us to prove the result using Poisson summation.

\textbf{Step 1:} We show the sine-determinant has no frequencies outside $[-1, 1]^n$.
To do this we show that for any $g \in \mathcal{S}(\mathbb{R}^n)$ such that $\hat{g}(\xi) = 0$ for all $\xi \in [-1,1]^n$, we have
\begin{equation}
\label{bandlimit_sinedeterminant}
\int_{\mathbb{R}^n} g(x) \det_{1\leq i,j \leq n}\Big[S(x_i-x_j)\Big]\, d^n x = 0.
\end{equation}
Recall that
$$
S(x) = \int_\mathbb{R} \mathbf{1}_{[-\tfrac{1}{2}, \tfrac{1}{2}]} (\xi) e(x\xi) \, d\xi.
$$
In particular that $S(x)$ is bandlimited to $[-\frac{1}{2},\frac{1}{2}]$. We have for $n \ge 2$, 
\begin{align*}
\det_{1\leq i,j \leq n}\Big[ S(x_i-x_j)\Big] &= \sum_{\sigma \in S_n} \sgn(\sigma) \prod_{i=1}^n S(x_i - x_{\sigma(i)}) \\
&= \sum_{\sigma \in S_n} \sgn(\sigma)\int_{[-\tfrac{1}{2}, \tfrac{1}{2}]^n} e\Big( \sum_{i=1}^n ( x_i - x_{\sigma(i)})\xi_i\Big)\, d^n\xi \\
&= \sum_{\sigma \in S_n} \sgn(\sigma)\int_{[-\tfrac{1}{2} , \tfrac{1}{2}]^n} e\Big( \sum_{i=1}^n x_i( \xi_i - \xi_{\sigma^{-1}(i)})\Big)\, d^n\xi.
\end{align*}
Hence 
\begin{multline*}
\int_{\mathbb{R}^n} g(x) \det_{1\leq i,j \leq n}\Big[S(x_i-x_j)\Big]\, d^n x \\ = \sum_{\sigma \in S_n} \sgn(\sigma) \int_{[-\tfrac12, \tfrac12]^n} \hat{g}(\xi_1 - \xi_{\sigma^{-1}(1)}\,,...,\,\xi_n - \xi_{\sigma^{-1}(n)}) d^n\xi = 0,
\end{multline*}
the last equality holding because $|\xi_i - \xi_{\sigma^{-1}(i)}| \leq 1$ holds for  $1 \le i\le n$ over the full domain of integration. 
This verifies \eqref{bandlimit_sinedeterminant}.

\textbf{Step 2:} We let $f(x) = \eta(x) \det_{1 \leq i,j \leq n}\Big[ S(x_i-x_j)\Big]$ and we show first that
\begin{equation}
\label{f_is_bandlimited}
\hat{f}(y) = 0\quad \textrm{for}\quad y \notin (-1-\tfrac{1}{2a}, 1 + \tfrac{1}{2a})^n.
\end{equation}
To see this, let $y$ be as in \eqref{f_is_bandlimited}. We have
\begin{equation}\label{fourier_transform}
\hat{f}(y) = \int_{\mathbb{R}^n}  \eta(x) \det_{1 \leq i,j \leq n}\Big[S(x_i-x_j)\Big]\,e(-x\cdot y)\, d^n x,
\end{equation}
and $e(-x\cdot y) \eta(x)$ has Fourier transform 
$$
\hat{\eta}(\xi +y) = \int_{\mathbb{R}^n}  e(-x\cdot y) \eta(x) e(-x\cdot \xi) \,dx,
$$ 
which satisfies
$$
\hat{\eta}(\xi+y) = 0,  \quad \textrm{for} \quad  \xi \in [-1,1]^n,
$$
owing to the restricted support\footnote{For $a= 1/2$ one  must note that for $\eta \in \mathcal{S}(\mathbb{R}^n)$, the condition $\supp \hat{\eta} \subset [-1,1]^n$ implies that $\hat{\eta}(\xi) = 0$ for $\xi$ on the boundary of $[-1,1]^n$.} of $\hat{\eta}$ in \eqref{fourier_support}. Now step 1  verifies \eqref{f_is_bandlimited}.

Now note that Poisson summation implies
$$
a^n \sum_{k \in (a\mathbb{Z})^n} f(k) = \sum_{k \in (a^{-1}\mathbb{Z})^n} \hat{f}(k).
$$
Given $0 < a \leq 1/2$, one has that any nonzero $k \in (a^{-1}\mathbb{Z})^n$ lies outside $(-1-\tfrac{1}{2a},1+\tfrac{1}{2a})^n$, so that
$$
a^n \sum_{k \in (a^{1}\mathbb{Z})^n} f(k) = \hat{f}(0).
$$
Upon recalling the definition of $f$, and using \eqref{fourier_transform} this equality reduces  to the identity \eqref{sum_to_integral} that we set out to prove.
\end{proof}

The band-limited agreement between the sine-process and the $a$-discrete sine process ceases for $a > 1/2$. We return to this matter in section \ref{sec:more_questions}.

%
%
\subsection{Random translation: AH point process}
\label{subsec:AHPP}
Lemma \ref{sine_reconstruction_leq_1/2} is important for us in the critical case $a=1/2$. 
We can randomly translate the $\tfrac{1}{2}$-discrete sine process in order to induce a $\mathbb{R}$-translation invariant point process having: 
(i) the points in all configurations  separated by integer multiples of $1/2$ and (ii) the $n$-point correlation functions agree with those of the sine process against all band-limited test functions in $\mathcal{K}_n$ in Theorem \ref{thm_correlations}.

\begin{thm} [AH Point Process]
\label{pp_counterexample}
Let $(w_j)$ be the configurations of the $\tfrac{1}{2}$-discrete sine process on
$\frac{1}{2}\mathbb{Z}$ and let $\omega$ be a random variable chosen  uniformly from the interval $[0,1/2)$ independently of this process. Then
the new  point process 
on $\mathbb{R}$ having configurations
\begin{equation}\label{AH_pp}
(\tilde{w}_j) := (w_j-\omega)
\end{equation}
is simple,  $\mathbb{R}$-translation invariant,  and  satisfies, for all $n \ge 1$, 
\begin{equation}
\label{translate_pp}
\mathbb{E} \distinctsum \eta(\tilde{w}_{j_1},...,\tilde{w}_{j_n}) = \int_{\mathbb{R}^n} \eta(x_1,...,x_n) \det_{1\leq i, j \leq n}\Big[S(x_i-x_j)\Big]\, d^n x,
\end{equation}
for all $\eta \in \mathcal{K}_n$. Moreover any pair of points in any (realized) configuration $(\tilde{w}_j)$ will be separated by distances of $1/2, 1, 3/2,...$
\end{thm}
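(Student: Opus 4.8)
The plan is to deduce all four assertions from the corresponding properties of the $\tfrac12$-discrete sine process $(w_j)$ of Proposition \ref{discrete_sine_exists}, using throughout that $(\tilde w_j)$ is obtained from it by the single common shift $-\omega$, with $\omega$ uniform on the fundamental domain $[0,\tfrac12)$ of $\mathbb{R}/\tfrac12\mathbb{Z}$. Simplicity and the separation property are immediate: $\tilde w_{j_1}-\tilde w_{j_2}=w_{j_1}-w_{j_2}$, so distinctness of the $\tilde w_j$ is equivalent to distinctness of the $w_j$ and follows from Proposition \ref{prop:simple}, while $\tilde w_j-\tilde w_k=w_j-w_k\in\tfrac12\mathbb{Z}$ shows that on the full-measure event that all points are distinct, every gap between two points lies in $\{\tfrac12,1,\tfrac32,\dots\}$. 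For $\mathbb{R}$-translation invariance, the key observation is that the law of the shifted process $(w_j-v)$ depends on $v\in\mathbb{R}$ only through the class of $v$ in $\mathbb{R}/\tfrac12\mathbb{Z}$: writing $v=\ell+v_0$ with $\ell\in\tfrac12\mathbb{Z}$ and $v_0\in[0,\tfrac12)$, Proposition \ref{prop:translation_invariant} gives that $(w_j-\ell)$ has the same law as $(w_j)$, hence $(w_j-v)$ has the same law as $(w_j-v_0)$. Given $x\in\mathbb{R}$ we have $\tilde w_j+x=w_j-(\omega-x)$; since $\omega$ is uniform on $[0,\tfrac12)$ the class of $\omega-x$ in $\mathbb{R}/\tfrac12\mathbb{Z}$ is again uniform, and conditioning on $\omega$ and invoking the previous sentence shows $(\tilde w_j+x)$ and $(\tilde w_j)$ have the same law.

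For the correlation identity \eqref{translate_pp}, set $\mathbf 1=(1,\dots,1)$ and $g(x):=\eta(x)\det_{1\le i,j\le n}[S(x_i-x_j)]$. Conditioning on $\omega$ and applying the discrete sine correlation formula \eqref{discrete_sine_correlations} with test function $\eta(\,\cdot\,-\omega\mathbf 1)$ (legitimate for Schwartz test functions, as noted just before Lemma \ref{sine_reconstruction_leq_1/2}), together with $\det[S(k_i-k_j)]=\det[S((k_i-\omega)-(k_j-\omega))]$, gives
\[
\mathbb{E}\distinctsum\eta(\tilde w_{j_1},\dots,\tilde w_{j_n})
=\mathbb{E}_\omega\Big[(\tfrac12)^n\sum_{k\in(\tfrac12\mathbb{Z})^n} g(k-\omega\mathbf 1)\Big].
\]
Now $g$ is a Schwartz function, being a Schwartz function times $\det[S(x_i-x_j)]$, which is smooth with all partial derivatives bounded; so Poisson summation yields $(\tfrac12)^n\sum_{k\in(\tfrac12\mathbb{Z})^n}g(k-\omega\mathbf 1)=\sum_{m\in(2\mathbb{Z})^n}e\big(-\omega(m_1+\cdots+m_n)\big)\,\hat g(m)$, a series converging absolutely and uniformly in $\omega$. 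Integrating over $\omega\in[0,\tfrac12)$ annihilates every mode with $m_1+\cdots+m_n\ne0$ and leaves
\[
\mathbb{E}\distinctsum\eta(\tilde w_{j_1},\dots,\tilde w_{j_n})=\sum_{\substack{m\in(2\mathbb{Z})^n\\ m_1+\cdots+m_n=0}}\hat g(m).
\]

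It remains to show that for $\eta\in\mathcal{K}_n$ the only surviving term is $m=0$, which gives $\hat g(0)=\int_{\mathbb{R}^n}\eta(x)\det[S(x_i-x_j)]\,d^nx$. For this I would sharpen Step 1 of the proof of Lemma \ref{sine_reconstruction_leq_1/2}: the computation there in fact shows that $\widehat{\det[S]}$ is supported in $\{\zeta\in\mathbb{R}^n:|\zeta_i|\le1\text{ for all }i\}\cap\{\zeta:\zeta_1+\cdots+\zeta_n=0\}$, the hyperplane condition being forced by the invariance of $\det[S(x_i-x_j)]$ under $x\mapsto x+c\mathbf 1$. Since $g=\eta\cdot\det[S]$ we have $\hat g=\hat\eta*\widehat{\det[S]}$, so $\supp\hat g\subseteq\supp\hat\eta+\supp\widehat{\det[S]}$, a closed set (a closed set plus a compact set); hence any $m\in\supp\hat g$ can be written $m=\xi_0+\zeta_0$ with $\xi_0\in\supp\hat\eta$ and $\zeta_0$ in the support set above. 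If moreover $m\in(2\mathbb{Z})^n\setminus\{0\}$ with $m_1+\cdots+m_n=0$, then $\sum_i(\xi_0)_i=-\sum_i(\zeta_0)_i=0$, whereas — since $m\ne0$ and $\sum_i m_i=0$ force at least two coordinates of $m$ to have absolute value $\ge2$, and $|(\zeta_0)_i|\le1$ for each $i$ — one gets $\sum_i|(\xi_0)_i|\ge2$; a point $\xi_0$ with $\sum_i(\xi_0)_i=0$ and $\sum_i|(\xi_0)_i|\ge2$ is however excluded from $\supp\hat\eta$ by the defining condition of $\mathcal{K}_n$, a contradiction. Thus $\hat g(m)=0$ for $m\ne0$ and the identity follows. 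I expect this last step — isolating the refined support of $\widehat{\det[S]}$ and running the elementary but slightly delicate lattice geometry that kills every mode except $m=0$ — to be the main obstacle; the simplicity, separation, and translation-invariance claims are essentially formal once one has the corresponding facts for the discrete sine process and the uniformity of $\omega$ on a fundamental domain of $\tfrac12\mathbb{Z}$.
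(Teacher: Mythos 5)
Your proposal is correct, but for the correlation identity \eqref{translate_pp} it takes a genuinely different route than the paper. The paper splits the test function: it uses $\mathcal{K}_n \subset \mathcal{I}_n + \mathcal{J}_n$, where $\mathcal{I}_n$ consists of $\eta$ with $\supp\hat\eta$ avoiding the hyperplane $\xi_1+\cdots+\xi_n=0$ and $\mathcal{J}_n$ of $\eta$ with $\supp\hat\eta\subset[-1,1]^n$; for $\mathcal{I}_n$ both sides of \eqref{translate_pp} vanish because the diagonal averaging integrates $\eta$ along the diagonal to zero, while for $\mathcal{J}_n$ it quotes Lemma \ref{sine_reconstruction_leq_1/2} (band-limited agreement, itself a Poisson-summation statement) together with translation invariance of the sine determinant. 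You instead treat all of $\mathcal{K}_n$ in one computation: Poisson summation on the shifted lattice expresses the $\omega$-averaged correlation sum as $\sum \hat g(m)$ over dual-lattice modes $m\in(2\mathbb{Z})^n$, the $\omega$-average kills the modes with $m_1+\cdots+m_n\neq 0$, and the remaining nonzero modes are killed by your sharpened form of Step 1 of Lemma \ref{sine_reconstruction_leq_1/2} --- that $\widehat{\det[S]}$ is a compactly supported (signed) measure living in $[-1,1]^n\cap\{\zeta_1+\cdots+\zeta_n=0\}$, which is indeed immediate from the permutation expansion used there --- plus a short lattice-geometry argument invoking the defining condition of $\mathcal{K}_n$ (your boundary case $\sum_i|(\xi_0)_i|=2$ is fine because that condition is a strict inequality). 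What your version buys: you never need the decomposition $\mathcal{K}_n\subset\mathcal{I}_n+\mathcal{J}_n$, which the paper asserts without detail and which requires a smooth splitting of $\hat\eta$; and your intermediate identity $\mathbb{E}\distinctsum\eta(\tilde w_{j_1},\dots,\tilde w_{j_n})=\sum_{m\in(2\mathbb{Z})^n,\ m_1+\cdots+m_n=0}\hat g(m)$ is a clean Fourier form of the general correlation formula of Remark \ref{AH_nlevel_formula}, valid for all Schwartz $\eta$. What the paper's version buys: modularity (Lemma \ref{sine_reconstruction_leq_1/2} is reused verbatim) and no distribution-theoretic discussion of $\widehat{\det[S]}$; in your write-up the one place deserving an explicit word is the product/convolution step $\hat g=\hat\eta*\widehat{\det[S]}$ and the support bound $\supp\hat g\subset\supp\hat\eta+\supp\widehat{\det[S]}$, which is justified exactly because $\widehat{\det[S]}$ is a compactly supported measure (or by writing $\hat g(m)$ directly via the $\sum_\sigma$ expansion, as in the paper's Step 2). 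Your treatments of simplicity, half-integer separation, and $\mathbb{R}$-translation invariance coincide with the paper's, with the invariance argument merely spelled out in more detail.
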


\begin{rmk}
\label{AH_nlevel_formula} 
{\em We call the process  \eqref{AH_pp} the {\it Alternative Hypothesis (AH) point process.}
 Note that, in full generality, the correlation functions of this process are as follows. For all $n \ge 1$ and all $\eta \in \mathcal{S}(\mathbb{R}^n)$,
\begin{multline}
\label{translate_pp_2}
\mathbb{E} \distinctsum \eta(\tilde{w}_{j_1},...,\tilde{w}_{j_n}) 
\\ = \frac{1}{1/2}\int_{0}^{1/2} (\frac{1}{2})^n \sum_{k \in (\frac{1}{2}\mathbb{Z})^n}   \eta(k_1-t,...,k_n-t) \det_{1\leq i,j \leq n}\Big[ S(k_i-k_j)]\Big]\, dt.
\end{multline}
This formula follows directly from \eqref{discrete_sine_correlations}.
}
\end{rmk}

\begin{proof}
We first address the final assertion on half-integer spacings. The distance of separation between the points of a configuration $(w_j-\omega)$ will be the same 
as that between the points of the $\tfrac{1}{2}$-discrete sine process, and so will be one of $0, 1/2, 1,...$. Because the discrete sine process is simple, we can remove $0$ from this list.

The simplicity of the new process is directly inherited from that of the $1/2$-discrete sine process.
The uniform averaging over $\omega \in [0,\frac{1}{2})$ converts the $\frac{1}{2} \mathbb{Z}$-translation invariance of the $1/2$-discrete sine process to $\RR$-translation invariance of
the new process.

On the other hand, verifying that \eqref{translate_pp} is satisfied is mainly a matter of book-keeping. We define subspaces
 $\mathcal{I}_n$ and $\mathcal{J}_n$ of $\mathcal{S}(\mathbb{R}^n)$ as follows:
\begin{enumerate}
\item[(i)]
 $\mathcal{I}_n$ is the collection of $\eta \in \mathcal{S}(\mathbb{R}^n)$ such that \\
$\supp \hat{\eta} \subset \{\xi \in \mathbb{R}^n:\, \xi_1+\cdots+\xi_n \neq 0\}$,
\item[(ii)]
 $\mathcal{J}_n$ is the collection of $\eta \in \mathcal{S}(\mathbb{R}^n)$ such that $\supp \hat{\eta} \subset [-1,1]^n.$
\end{enumerate}
Then we have 
$$
\mathcal{K}_n \subset \mathcal{I}_n + \mathcal{J}_n.
$$
To verify  \eqref{translate_pp} it suffices to  show it holds  for all $\eta \in \mathcal{I}_n$ and for all $\eta \in \mathcal{J}_n$.

\textbf{(i):} We consider $\mathcal{I}_n$ first. 
One may first verify that for $\eta \in \mathcal{I}_n$ and any $(x_1,...,x_n)\in \mathbb{R}^n$,
\begin{equation}
\label{oscillation_means_0}
\int_{-\infty}^\infty \eta(x_1-t,...,x_n-t)\,dt = 0.
\end{equation}
Now note that
\begin{multline}
\label{translated_correlations}
\mathbb{E} \distinctsum \eta(w_{j_1}-\omega,...,w_{j_n}-\omega) \\= \frac{1}{1/2} \int_0^{1/2} (1/2)^n \sum_{k\in (\tfrac12 \mathbb{Z})^n} \varphi(k_1-t,...,k_n-t) \det_{1\leq i,j \leq n}\Big[S(k_i-k_j)\Big]\, dt.
\end{multline}
Because the functions $\det_{1\leq i,j \leq n}\Big[S(k_i-k_j)\Big]$ are invariant under translations 
$(k_1,...,k_n) \mapsto (k_1-x,...,k_n-x)$, the right hand side of \eqref{translated_correlations} 
 reduces to
$$
\frac{1}{1/2} \int_{-\infty}^\infty (1/2 )^n \sum_{\substack{k\in (\tfrac12 \mathbb{Z})^n \\ k_1 = 0}} \varphi(k_1-t,...,k_n-t) \det_{1\leq i,j \leq n}\Big[S(k_i-k_j)\Big]\, dt.
$$
By dominated convergence (the functions $\eta$ have rapid decay) we can swap the order of integration and summation, and it follows from \eqref{oscillation_means_0} that for $\eta \in \mathcal{I}_n$ the expression above is $0$.

On the other hand, again because of the translation invariance of the sine-determinant, the right hand side of \eqref{translate_pp} is
$$
\int_{\mathbb{R}^{n-1}} \int_{-\infty}^\infty \eta(x_1-t,...,x_n-t) \det_{1\leq i, j \leq n}\Big[S(x_i-x_j)\Big]\, dt\, dx_2\cdots dx_n,
$$
where $x_1$ is no longer integrated over but instead we set $x_1=0$. And again for $\eta \in \mathcal{I}_n$ this expression is $0$. This establishes the claim for $\mathcal{I}_n$.

\textbf{(ii):} For $\eta \in \mathcal{J}_n$, the left hand side of \eqref{translate_pp} is
\begin{multline*}
\frac{1}{1/2}\int_0^{1/2}\mathbb{E} \distinctsum \eta(w_{j_1}-t,...,w_{j_n}-t)\, dt 
\\= \frac{1}{1/2}\int_0^{1/2}\int_{\mathbb{R}^n} \eta(x_1-t,...,x_n-t) \det_{1\leq i,j, \leq n}\Big[ S(x_i-x_j)\Big]\, d^n x \, dt\\
= \int_{\mathbb{R}^n} \eta(x) \det_{1\leq i,j \leq n}\Big[ S(x_i-x_j)\Big]\, d^n x \quad\quad\quad\quad
\end{multline*}
using Lemma \ref{sine_reconstruction_leq_1/2} in the first line and the translation invariance of the sine-determinant in the second.
\end{proof}

Theorem \ref{alternative_GUE} is a variant of 
Theorem  \ref{pp_counterexample}   in which the point process is replaced by a single sequence $\{c_j\}$. We will prove Theorem \ref{alternative_GUE} in the next section using the ergodic theorem.

%
%
\section{An ergodic construction}
\label{sec:construction}

\subsection{The ergodic theorem}
We review some basic 
definitions and
results from ergodic theory; see e.g. \cite[Ch. 28]{FrGr13} for a more extensive discussion. Let $\Omega$ be a measurable space. Recall that a sequence of random variables $X_0,X_1,...$ with each $X_\ell$ taking values in $\Omega$ is said to be \emph{stationary} if the random processes
$$
(X_\ell)_{\ell \in \mathbb{N}}\quad \mathrm{and}\quad (X_{\ell+1})_{\ell \in \mathbb{N}},
$$
have the same distribution (for $\mathbb{N} := \{\ell \geq 0\}$). A stationary sequence of random variables is said to be \emph{ergodic} if for any set $A \subset \Omega^{\mathbb{N}}$, with $A$ measurable with respect to the product $\sigma$-algebra, the event
$$
(X_{\ell+1})_{\ell \in \mathbb{N}} \in A
$$
is the same event as
$$
(X_{\ell})_{\ell \in \mathbb{N}} \in A,
$$
only when the latter event has probability $0$ or $1$. (Said another way, the shift-invariant $\sigma$-algebra for $(X_\ell)_{\ell \in \mathbb{N}}$ is trivial.)

\begin{thm}[Birkhoff's ergodic theorem]
If $(X_\ell)_{\ell \in \mathbb{N}}$ is an ergodic sequence of random variables, for $f$ a measurable complex-valued function with $\mathbb{E}|f(X_0)| < +\infty$,
$$
\lim_{N\rightarrow\infty} \frac{1}{N} \sum_{\ell=0}^{N-1} f(X_\ell) = \mathbb{E} f(X_0),
$$
almost surely.
\end{thm}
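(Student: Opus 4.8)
The plan is to reduce the statement to the classical pointwise ergodic theorem for a single measure-preserving transformation, and then to prove that via Garsia's maximal ergodic inequality. First I would set up the reduction: let $\mu$ be the law of the process $(X_\ell)_{\ell\in\mathbb{N}}$ on $\Omega^{\mathbb{N}}$, let $T:\Omega^{\mathbb{N}}\to\Omega^{\mathbb{N}}$ be the left shift, and let $h(y):=f(y_0)$. Stationarity of $(X_\ell)$ is exactly the statement that $T$ preserves $\mu$; the hypothesis that the sequence is ergodic is exactly the statement that the $T$-invariant $\sigma$-algebra is $\mu$-trivial; and $\tfrac1N\sum_{\ell=0}^{N-1}f(X_\ell)$ has the same law as $y\mapsto\tfrac1N\sum_{\ell=0}^{N-1}h(T^\ell y)$, while $\int_{\Omega^{\mathbb{N}}}h\,d\mu=\mathbb{E}f(X_0)$ is finite. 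So it suffices to prove that $\tfrac1N\sum_{\ell=0}^{N-1}h\circ T^\ell\to\int h\,d\mu$ $\mu$-almost everywhere; splitting $h$ into real and imaginary parts, I may assume $h$ is real-valued. (Note $T$ need not be invertible here, which is harmless for the argument.)

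The heart of the matter is the maximal ergodic theorem: with $S_N:=\sum_{\ell=0}^{N-1}h\circ T^\ell$ and $M_N:=\max(0,S_1,\dots,S_N)$, one has $\int_{\{M_N>0\}}h\,d\mu\ge 0$ for every $N$. I would prove this by the short Garsia computation: on the set $\{M_N>0\}$ one has, for each $1\le k\le N$, $S_k=h+S_{k-1}\circ T\le h+M_N\circ T$ (using $S_0=0$ and $M_N\ge0$), hence $M_N\le h+M_N\circ T$ there; since $M_N$ vanishes off $\{M_N>0\}$ and $T$ preserves $\mu$, integrating gives $\int_{\{M_N>0\}}h\ge\int_{\{M_N>0\}}(M_N-M_N\circ T)\ge\int M_N-\int M_N\circ T=0$. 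Letting $N\to\infty$ and using $|h|\in L^1(\mu)$ with dominated convergence yields $\int_{\{\sup_N S_N>0\}}h\,d\mu\ge0$.

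Finally I would deduce Birkhoff's theorem. Set $\overline f:=\limsup_N S_N/N$ and $\underline f:=\liminf_N S_N/N$; both are $T$-invariant, hence $\mu$-a.e. equal to constants, say $A$ and $B$, with $B\le A$. To see $A\le\mathbb{E}h$: if not, fix $\alpha$ with $\mathbb{E}h<\alpha<A$ and apply the maximal ergodic theorem to $g:=(h-\alpha)\mathbf{1}_{E}$ on the $T$-invariant set $E:=\{\overline f>\alpha\}$, which has full measure; since the Birkhoff averages of $g$ satisfy $\limsup_N\tfrac1N\sum_{\ell=0}^{N-1}g\circ T^\ell=\overline f-\alpha>0$ on $E$, the inequality gives $0\le\int_E(h-\alpha)\,d\mu=\mathbb{E}h-\alpha<0$, a contradiction. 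Applying the same reasoning to $-h$ gives $B\ge\mathbb{E}h$, so $A=B=\mathbb{E}h$ and $S_N/N\to\mathbb{E}h$ $\mu$-a.e.; transporting this back through the reduction proves the theorem. The main obstacle is the maximal ergodic theorem—the pointwise estimate $M_N\le h+M_N\circ T$ on $\{M_N>0\}$ is the one genuinely non-formal step, with everything else being measure-theoretic bookkeeping; an alternative route through von Neumann's mean ergodic theorem still requires essentially the same maximal inequality to upgrade $L^2$-convergence to a.e. convergence, so Garsia's direct approach seems cleanest.
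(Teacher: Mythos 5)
Your proof is correct, but note that the paper does not actually prove this statement: Birkhoff's ergodic theorem is quoted as a known result, with the proof delegated entirely to the citation \cite[Thm. 1, Sec. 28.4]{FrGr13}. What you have written is a self-contained version of the standard textbook argument: the reduction to the one-sided shift $T$ on path space (and your reduction is faithful to the paper's definition of ergodicity — strictly shift-invariant sets $T^{-1}A=A$ pull back to events of probability $0$ or $1$, which is all you need, since $E=\{\overline f>\alpha\}$ is strictly $T$-invariant), Garsia's proof of the maximal ergodic inequality, and the usual $\limsup$/$\liminf$ argument applied to $(h-\alpha)\mathbf{1}_E$ and then to $-h$. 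The individual steps check out: the pointwise bound $M_N\le h+M_N\circ T$ on $\{M_N>0\}$, the integration step using $M_N\in L^1$ (it is dominated by $\sum_{\ell<N}|h|\circ T^\ell$) together with measure preservation and $M_N\ge 0$, the passage $N\to\infty$ by dominated convergence with dominating function $|h|$, and the identification of $\{\sup_N\sum_{\ell<N}g\circ T^\ell>0\}$ with $E$ up to null sets because $E$ is invariant and, by ergodicity, of full measure when $\alpha<A$. One small phrasing caveat: to transport almost sure convergence back from path space you need more than equality in law of each individual average; what makes it work is that every average is a fixed measurable functional of the path and the whole path has law $\mu$, so the convergence event has the same probability in both settings — this is what your ``transporting back'' implicitly uses and is worth saying explicitly. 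In short, your argument is complete and is essentially the argument found in the reference the paper cites; the only difference from the paper is that the paper supplies no proof of its own.
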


See for instance \cite[Thm. 1, Sec. 28.4]{FrGr13} for a proof of the ergodic theorem.
 
We will make use of this result when the random variable $X_0$ is a point processes, $X_1$ is a point process, etc.

\subsection{Proof of Theorem \ref{alternative_GUE}}

We need the following fact.

\begin{lem}
\label{sine_ergodic}
Consider the $\tfrac{1}{2}$-discrete sine process with configurations $(w_j)$. Let $X_0$ be the random configuration $(w_j)$, $X_1$ the random configuration $(w_j-1/2)$, $X_2$
 the random configuration $(w_j-1)$, etc. Then $X_0, X_1,...$ is an ergodic sequence.
\end{lem}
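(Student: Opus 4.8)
The plan is to show that the shift on the $\tfrac12$-discrete sine process is ergodic by showing it is in fact \emph{mixing}, which is stronger and, for a determinantal point process, tractable via the decay of the correlation kernel $S$. Concretely, the shift $X_\ell = (w_j - \ell/2)$ corresponds to the transformation $\tau$ on $\mathrm{Conf}(\tfrac12\mathbb{Z})$ given by translation by $-1/2$; since the $\tfrac12$-discrete sine process is translation invariant by Proposition \ref{prop:translation_invariant}, this $\tau$ preserves the law of the process, so $(X_\ell)$ is stationary. To get ergodicity I would argue that for any two cylinder events $A$ and $B$ (events depending on the configuration restricted to a bounded window of $\tfrac12\mathbb{Z}$), one has $\mathbb{P}(A \cap \tau^{-m}B) \to \mathbb{P}(A)\mathbb{P}(B)$ as $m \to \infty$. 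Because cylinder sets generate the Borel $\sigma$-algebra and are closed under finite intersection, mixing on this generating class gives mixing on all measurable sets, hence ergodicity (a shift-invariant set $A$ would satisfy $\mathbb{P}(A) = \mathbb{P}(A\cap\tau^{-m}A)\to \mathbb{P}(A)^2$).

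The key input is that determinantal point processes with a kernel $K(x,y)$ that decays to $0$ as $|x-y|\to\infty$ are mixing; here $K(k_i,k_j) = S(k_i-k_j)$ and $S(x) = \sin(\pi x)/(\pi x) \to 0$. The standard mechanism: events on a window $V$ are measurable with respect to the (discrete) determinantal process restricted to $V$, whose finite-dimensional distributions are polynomials in the entries $\{S(k-k') : k,k' \in V\}$; the joint law of the process on $V \cup (V + m\cdot\tfrac12\mathbb{Z}$-shift$)$ is determinantal with kernel whose off-diagonal block consists of entries $S(k - k' - m/2)$, all of which tend to $0$. Expanding the relevant determinants/permanents and using $S$ bounded shows the cross terms vanish in the limit, leaving the product of the two marginal expressions; this yields $\mathbb{P}(A\cap\tau^{-m}B)\to\mathbb{P}(A)\mathbb{P}(B)$. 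Alternatively, one may cite a known general theorem (e.g. Soshnikov \cite{So00}, or Osada--Osada, or Lyons) that a translation-invariant determinantal process whose kernel tends to $0$ at infinity is mixing, and hence ergodic, under the group of translations preserving it; I would likely phrase the proof this way, reducing the work to: (a) checking $\tau$ preserves the law (done via Proposition \ref{prop:translation_invariant}), and (b) invoking the cited ergodicity/mixing result for determinantal processes with decaying kernel.

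I expect the main obstacle to be bookkeeping rather than conceptual: making the passage "mixing on cylinder sets $\Rightarrow$ ergodicity of the induced $\Omega^{\mathbb{N}}$-valued sequence $(X_\ell)$" fully rigorous, since the definition of ergodicity in the excerpt is phrased in terms of shift-invariant measurable subsets $A \subset \Omega^{\mathbb{N}}$ of the \emph{sequence of point processes}, not of a single $\mathrm{Conf}$-valued dynamical system. I would handle this by identifying the law of $(X_\ell)_{\ell\in\mathbb{N}}$ with the push-forward of the process law under the orbit map $u \mapsto (\tau^\ell u)_{\ell}$, noting that the shift on $\Omega^{\mathbb{N}}$ pulls back to $\tau$ on $\mathrm{Conf}(\tfrac12\mathbb{Z})$, so the shift-invariant $\sigma$-algebra upstairs corresponds exactly to the $\tau$-invariant $\sigma$-algebra on $\mathrm{Conf}(\tfrac12\mathbb{Z})$, and the latter is trivial precisely because $\tau$ is mixing. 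If a cleaner route is wanted, one can instead directly verify the $0$--$1$ law: any $\tau$-invariant Borel set $A$ is approximated in measure by cylinder sets $A_k$; mixing forces $\mathbb{P}(A) = \mathbb{P}(A\cap\tau^{-m}A) = \lim_k \mathbb{P}(A_k \cap \tau^{-m}A_k)$ and taking $m\to\infty$ gives $\mathbb{P}(A) = \mathbb{P}(A)^2$.
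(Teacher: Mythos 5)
Your proposal is correct and follows essentially the same route as the paper: the paper proves the lemma by citing a general result (Theorem 4.2.34 of Anderson--Guionnet--Zeitouni) that a translation-invariant determinantal process whose kernel $K(j-k)$ tends to $0$ at infinity is ergodic, noting the proof carries over from $\mathbb{R}^d$ to $a\mathbb{Z}$, and then applying it since $S(x)\to 0$. Your sketched mixing-on-cylinder-sets argument is precisely the content of that cited theorem, and your ``cite a known general theorem'' alternative is exactly what the paper does.
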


We prove this lemma in the  Appendix, in Section  \ref{subsec:ergodicity},  by reference to a general result (Theorem \ref{thm:general_ergodic}).
 By combining it with the Birkhoff ergodic theorem, we obtain:

\begin{thm}
\label{almostsure_correlations}
For almost all configurations $(w_j)$ of the $\tfrac{1}{2}$-discrete sine process, for all $n\geq 1$ and all $\varphi \in \mathcal{S}(\mathbb{R}^n)$,
\begin{multline}
\label{averaging_limit}
\lim_{N\rightarrow\infty} \frac{1}{N} \sum_{\ell=0}^{N-1} \bigg( \distinctsum \varphi(w_{j_1}-\tfrac{\ell}{2},..., w_{j_n} - \tfrac{\ell}{2})\bigg) \\
= (1/2)^n \sum_{k\in (\tfrac12 \mathbb{Z})^n} \varphi(k) \det_{1\leq i,j \leq n}\Big[ S(k_i-k_j)\Big].
\end{multline}
\end{thm}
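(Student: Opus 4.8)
The plan is to apply Birkhoff's ergodic theorem to the ergodic sequence $(X_\ell)_{\ell\ge 0}$ furnished by Lemma \ref{sine_ergodic}, which yields \eqref{averaging_limit} for one test function at a time, and then to remove the dependence of the exceptional null set on the test function by a separability and density argument.

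\textbf{One test function at a time.} Fix $n\ge 1$ and $\varphi\in\mathcal{S}(\mathbb{R}^n)$, and set $f(u):=\distinctsum \varphi(u_{j_1},\ldots,u_{j_n})$, a measurable function on $\mathrm{Conf}(\tfrac12\mathbb{Z})$. Since the $\tfrac12$-discrete sine process is simple (Proposition \ref{prop:simple}), almost surely $\distinctsum|\varphi(w_{j_1},\ldots,w_{j_n})|\le \sum_{k\in(\frac12\mathbb{Z})^n}|\varphi(k)|=\|\varphi\|_{\ell^1((\frac12\mathbb{Z})^n)}<\infty$, so $\mathbb{E}|f(X_0)|<\infty$. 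Birkhoff's theorem applied to the ergodic sequence $X_\ell=(w_j-\ell/2)$ gives, almost surely, $\tfrac1N\sum_{\ell=0}^{N-1}f(X_\ell)\to\mathbb{E}f(X_0)$. Here $f(X_\ell)=\distinctsum\varphi(w_{j_1}-\tfrac\ell2,\ldots,w_{j_n}-\tfrac\ell2)$, while by the $\ell^1$-extension of \eqref{discrete_sine_correlations} recorded just above Lemma \ref{sine_reconstruction_leq_1/2} one has $\mathbb{E}f(X_0)=(1/2)^n\sum_{k\in(\frac12\mathbb{Z})^n}\varphi(k)\det_{1\le i,j\le n}[S(k_i-k_j)]$. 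Thus \eqref{averaging_limit} holds for this $(n,\varphi)$ off a $\varphi$-dependent null event $E_\varphi$.

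\textbf{A single null set and a density argument.} For each $n$ choose a countable subset $D_n\subset\mathcal{S}(\mathbb{R}^n)$ dense in the Schwartz topology (possible since $\mathcal{S}(\mathbb{R}^n)$ is separable), and let $E:=\bigcup_n\bigcup_{\varphi\in D_n}E_\varphi$, still a null set. Off $E$, \eqref{averaging_limit} holds for every $n$ and every $\varphi\in D_n$. Now fix $n$, a configuration $(w_j)\notin E$, and arbitrary $\varphi\in\mathcal{S}(\mathbb{R}^n)$, and pick $\varphi_m\in D_n$ with $\varphi_m\to\varphi$ in $\mathcal{S}(\mathbb{R}^n)$. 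Because $(w_j)$ is simple and the shift $w_j\mapsto w_j-\tfrac\ell2$ preserves the lattice $\tfrac12\mathbb{Z}$, one has the deterministic bound $\big|\distinctsum(\varphi-\varphi_m)(w_{j_1}-\tfrac\ell2,\ldots,w_{j_n}-\tfrac\ell2)\big|\le\|\varphi-\varphi_m\|_{\ell^1((\frac12\mathbb{Z})^n)}$, uniformly in $\ell$; and, $\det[S(k_i-k_j)]$ being bounded, the right side of \eqref{averaging_limit} changes by $O_n\big(\|\varphi-\varphi_m\|_{\ell^1((\frac12\mathbb{Z})^n)}\big)$ under $\varphi\mapsto\varphi_m$. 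Since $\|\psi\|_{\ell^1((\frac12\mathbb{Z})^n)}$ is dominated by a Schwartz seminorm of $\psi$, we get $\|\varphi-\varphi_m\|_{\ell^1((\frac12\mathbb{Z})^n)}\to0$, and a three-$\varepsilon$ argument combining this with the already-proved case $\varphi_m$ yields \eqref{averaging_limit} for $\varphi$.

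\textbf{Expected main obstacle.} The conceptual content is entirely in Lemma \ref{sine_ergodic} (proved in the appendix) together with Birkhoff's theorem; the only delicate point is the uniform passage in the density step, and this is precisely where simplicity of the $\tfrac12$-discrete sine process — which supplies the deterministic $\ell^1$ control on the correlation sums after a lattice-preserving shift — is used in an essential way.
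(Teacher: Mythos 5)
Your proposal is correct and follows essentially the same route as the paper: Birkhoff's theorem applied via Lemma \ref{sine_ergodic} for each fixed $\varphi$ (using the $\ell^1$-extension of \eqref{discrete_sine_correlations} to identify the limit), followed by a separability/density argument in which simplicity supplies the uniform deterministic bound \eqref{simple_upperbound}. The only cosmetic difference is that you take a countable dense set in $\mathcal{S}(\mathbb{R}^n)$ and pass to $\ell^1\big((\tfrac12\mathbb{Z})^n\big)$ via Schwartz seminorms, while the paper works directly with a countable dense subset of $\ell^1\big((\tfrac12\mathbb{Z})^n\big)$, which in fact yields the conclusion for all $\ell^1$ test functions.
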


\begin{proof}
For a single fixed $\varphi \in \mathcal{S}(\mathbb{R}^n)$, this follows directly from the ergodic theorem. To prove the result for all $n$ and $\varphi$ we make use of a density argument. Because the discrete sine process is simple, almost surely the points of $(w_j)$ are distinct, and in this case
\begin{equation}
\label{simple_upperbound}
\distinctsum |\varphi(w_{j_1}-\tfrac{\ell}{2},..., w_{j_n} - \tfrac{\ell}{2})| \leq \sum_{\substack{v_1, ..., v_n \in \tfrac12 \mathbb{Z} \\ \mathrm{distinct}}} |\varphi(v_1-\tfrac{\ell}{2},...,v_n-\tfrac{\ell}{2})| \leq \| \varphi \|_{\ell^1\big((\tfrac12 \mathbb{Z})^n\big)}.
\end{equation}
But $\ell^1\big((\tfrac12 \mathbb{Z})^n\big)$ is separable for each $n$. Take a countable dense subset $\{ \varphi_1^{(n)}, \varphi_2^{(n)},...\}$ of $\ell^1\big((\tfrac12 \mathbb{Z})^n\big)$. For each $n$ almost surely \eqref{averaging_limit} will be satisfied for all $\varphi_i^{(n)}$, since the union of exceptional configurations still has measure $0$, density and 
\eqref{simple_upperbound} then imply that \eqref{averaging_limit} will be satisfied for all $\varphi \in \mathcal{S}(\mathbb{R}^n)$ (in fact all $\varphi \in \ell^1[(\tfrac12 \mathbb{Z})^n]$). Moreover the union of all $n$ of 
the exceptional configurations must also have measure $0$, so we have proved the lemma.
\end{proof}

\begin{proof}[Proof of Theorem \ref{alternative_GUE}]
With probability $1$, configurations $(w_j)$ of the $\tfrac12$-discrete sine process consist of distinct points and are such that \eqref{averaging_limit} holds for all $n\geq 1$ and $\varphi \in \mathcal{S}(\mathbb{R}^n)$. Let $(c_j)$ be one such configuration. Clearly $c_{j+1}-c_j \in \{1/2, 1, 3/2,...\}$. 

For any $t$ for such a configuration,
\begin{multline*}
\lim_{N\rightarrow\infty} \frac{1}{N} \sum_{\ell=0}^{N-1} \bigg( \distinctsum \varphi(c_{j_1}-t - \tfrac{\ell}{2},..., c_{j_n}-t-\tfrac{\ell}{2})\bigg) \\ = (1/2)^n \sum_{k \in (\tfrac{1}{2}\mathbb{Z})^n} \varphi(k_1-t,...,k_n-t) \det_{1\leq i,j \leq n}\Big[S(k_i-k_j)\Big].
\end{multline*}
By \eqref{simple_upperbound}, for all $\ell$ and all $t \in [0,1/2)$,
\begin{equation}
\label{bounded_all_t}
\distinctsum  \varphi(c_{j_1}-t - \tfrac{\ell}{2},..., c_{j_n}-t-\tfrac{\ell}{2}) = O_\varphi(1),
\end{equation}
so averaging $t$ from $0$ to $1/2$ and using dominated convergence to interchange the order of averaging and limit,
\begin{multline*}
\lim_{N\rightarrow\infty} \frac{1}{N} \frac{1}{1/2}\int_0^{1/2}\sum_{\ell=0}^{N-1} \bigg( \distinctsum \varphi(c_{j_1}-t - \tfrac{\ell}{2},..., c_{j_n}-t-\tfrac{\ell}{2})\bigg)\, dt \\ = \frac{1}{1/2} \int_0^{1/2} (1/2)^n \sum_{k \in (\tfrac{1}{2}\mathbb{Z})^n} \varphi(k_1-t,...,k_n-t) \det_{1\leq i,j \leq n}\Big[S(k_i-k_j)\Big]\, dt.
\end{multline*}
The left hand side above is
\begin{equation}
\lim_{N\rightarrow\infty} \frac{1}{N/2} \int_0^{N/2} \distinctsum \varphi(w_{j_1}-t,...,w_{j_n}-t)\, dt,
\end{equation}
while the right hand side is the $n$-point correlation function of the process $(w_j - \omega)$ in Theorem \ref{pp_counterexample} (recall \eqref{translated_correlations}), and therefore agrees for test functions in $\mathcal{K}_n$ with the correlation functions of the sine process itself.

It remains to verify
\begin{multline}
\label{discrete_to_dyadic_continuous}
\lim_{T\rightarrow\infty} \frac{1}{T} \int_T^{2T} \distinctsum \varphi(c_{j_1}-t,...,c_{j_n}-t)\,dt \\= \lim_{N\rightarrow\infty} \frac{1}{N/2} \int_0^{N/2} \distinctsum \varphi(c_{j_1}-t,...,c_{j_n}-t)\,dt.
\end{multline}
This is done in the following way: the right hand side of \eqref{discrete_to_dyadic_continuous}, with a limit over integers $N$, may be seen to be equal to a continuous limit
$$ 
\lim_{T\rightarrow\infty} \frac{1}{T} \int_0^{T} \distinctsum \varphi(c_{j_1}-t,...,c_{j_n}-t)\,dt,
$$
using the pointwise bound \eqref{bounded_all_t}. This may in turn be seen to be equal to the left hand side of \eqref{discrete_to_dyadic_continuous} by subtracting an integral from $0$ to $T$ from an integral from $0$ to $2T$. We omit the details.
\end{proof}

%
%
\section{Concluding Remarks}
 \label{sec:more_questions}

The underlying construction of this paper showed that there
exists a point process supported on the lattice $\frac{1}{2} \mathbb{Z}$
that  had $n$-point correlation statistics perfectly matching that of the sine process for  a class of bandlimited test functions on $[-1,1]^n$, for all $n \ge 1$. 
More generally one may consider point supported on a lattice $a \mathbb{Z}$ and ask for what $B$ can a perfect match be achieved for all test functions bandlimited to an interval $[-B, B]^n$, for all $n \ge 1$. 
For the sine process in Lemma \ref{sine_reconstruction_leq_1/2}
we gave an existence region where it can be done, achieving $B= \frac{1}{2a}$ for $0 < a \le \frac{1}{2}.$ This sort of problem can be considered for more general $a$ and $B$ and for more general point processes. We address such questions in a  companion paper \cite{LaRo19}.

Another question concerns uniqueness: is the AH point process in Theorem \ref{pp_counterexample} the unique point process having (i) the points in all configurations separated by integer multiples of $1/2$ and (ii) the $n$-point correlation functions agreeing with those of the sine-process against all band-limited test functions in $\mathcal{K}_n$ as in Theorem \ref{thm_correlations}? We  expect other point processes with these properties exist.  An analogous question was  previously posed less formally in \cite[Question A.5]{AIM}. 

  Recently Farmer, Gonek and Lee  \cite{FarGonLee14} made
  a   study of    the (normalized) spacings of the
 zeros of the derivative Riemann $\xi'(s)$ of the Riemann $\xi$-function $\xi(s) = \frac{1}{2} s(s-1)\pi^{-\frac{s}{2}} \Gamma( \frac{s}{2})$,
 as a possible approach to disprove the Alternative Hypothesis, see \cite[Section 2]{FarGonLee14}.
 Assuming the Riemann hypothesis, all zeros of $\xi'(s)$ lie on the critical line. Assuming RH, they  proved bounds on the zero spacings,
 that $\liminf_{n \to \infty}(\gamma_n^{'} - \gamma_n^{'}) \frac{\log \gamma_n^{'}}{2\pi}  \le 0.897$, where
 $\gamma_n^{'}$ denote the ordinates of the zeros of  $\xi'(s)$ in the upper half plane, ordered in increasing order.   
   Sodin \cite{Sod17} presented a family of point process $\mathfrak{Si}_a$ depending on a real parameter $a$ modeling 
 critical points of characteristic polynomials of  GUE-random matrices.
 He showed under very strong assumptions (the Riemann Hypothesis and the GUE Hypothesis\footnote{In Corollary 2.3 Sodin uses the term `multiple correlation conjecture', meaning convergence
 of (shifted) normalized zeta zeros to the sine process in distribution. This  statement is equivalent to the GUE Hypothesis.})
  that the process $\mathfrak{Si}_0$ with $u=0$ would model the $ T \to \infty$ limit of 
  (shifted) normalized spacings of zeros of $\xi'(s)$ (\cite[Corollary 2.3]{Sod17}). 
  The point process  $\mathfrak{Si}_0$ has point spacings more tightly clustered about the mean spacing  $1$ than those of the sine process. 
   One may ask a parallel question: what is the  range of bandlimited agreement permissible
   for correlation functions of a point 
   process supported on a lattice $a\mathbb{Z}$ to that of the point process  $\mathfrak{Si}_0$.

\bigskip


\noindent {\bf Acknowledgments.}
Work of the  first author was partially supported by  
NSF grant DMS-1701576, a Chern Professorship at MSRI in Fall 2018 and by a  Simons Foundation Fellowship in 2019. MSRI is partially supported by an NSF grant. Work of the second author was partially supported by  NSF grant DMS-1701577 and by an NSERC grant. We thank F. Aryan for questions that led to Appendix \ref{sec:Gaps}. We thank S. Baluyot, D. Goldston, D. Fiorilli, S. Sodin, and T. Tao for other discussions related to this paper, and an anonymous referee for helpful comments and corrections.

\appendix

\section{The discrete sine process: existence and ergodicity}\label{sec:A}

In this appendix we show that the discrete sine process exists, as claimed by Proposition \ref{discrete_sine_exists} and demonstrate that it is ergodic, as claimed by Lemma \ref{sine_ergodic}.

\subsection{Existence}
\label{subsec:A1}

In what follows\footnote{Macchi's theorem holds more generally for $\mathcal{X}$ a locally compact Polish space.} 
$\mathcal{X} = a\mathbb{Z}$, and we give $\ell^2(\mathcal{X})$ the inner product $\langle f, g \rangle := a \sum_{k \in a\mathbb{Z}} f(k)\overline{g(k)}$. A general criterion for the realizability\footnote{ Macchi \cite{Ma75},   Sec. IV. B. Definition 7, treats determinantal point processes (called fermion processes).} of point processes is as follows (Macchi \cite[Theorem 12]{Ma75}):

\begin{thm}[Macchi]
\label{Macchi}
On the space $\mathcal{X} = a\mathbb{Z}$
define the operator $\mathbb{K}:\, \ell^2(\mathcal{X}) \rightarrow \ell^2(\mathcal{X})$ by
$$
\mathbb{K} f(j) = a\sum_{k \in a\mathbb{Z}} K(j,k) f(k).
$$
If $\mathbb{K}$ is a Hermitian locally trace class\footnote{An operator $\mathbb{K}:\, L^2(E,d\mu) \rightarrow L^2(E,d\mu)$ is said to be \emph{locally} trace class if for any compact $D \subset E$ the operator $\mathbb{K}_D:\, L^2(D,d\mu) \rightarrow L^2(D,d\mu)$ defined by
$$
\mathbb{K}_D f(x) = \int_D K(x,y) f(y)\, d\mu(y)
$$
is trace class.} 
operator, then the functions 
$$
\rho_n(k):= \det_{1 \leq i,j \leq n}\Big[ K(k_i,k_j)\Big]
$$
are the correlation functions of a point process on $\mathcal{X}$ if any only if\footnote{We make use of the usual notation for operators, so that $0 \leq \mathbb{K} \leq I$ means that for any $\psi \in \ell^2(\mathcal{X})$ we have
$$
0 \leq \langle \psi, \mathbb{K} \psi \rangle \leq \langle \psi, \psi \rangle.
$$}
$$
0 \leq \mathbb{K} \leq I.
$$
\end{thm}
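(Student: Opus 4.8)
The plan is to prove the two implications of the equivalence separately on the discrete space $\mathcal{X}=a\mathbb{Z}$. For a finite $V\subset a\mathbb{Z}$ write $\mathbb{K}_V=P_V\mathbb{K}P_V$ for the compression of $\mathbb{K}$ to $\ell^2(V)$, so that in the orthonormal basis $\{a^{-1/2}\delta_k\}_{k\in V}$ it is the Hermitian $|V|\times|V|$ matrix with entries $aK(j,k)$ and $\det\mathbb{K}_U=a^{|U|}\det_{i,j}[K(k_i,k_j)]$ for every $U\subseteq V$; the one algebraic tool I would lean on is the principal-minor identity $\det(I+x\mathbb{M})=\sum_U\det[\mathbb{M}_U]\,x^{|U|}$, the empty subset contributing $1$.

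For necessity, suppose a point process with $\rho_n=\det_{i,j}[K(k_i,k_j)]$ exists and fix finite $V$; let $N_V$ count its points in $V$. Since $\rho_n$ vanishes whenever two arguments coincide, the $k$-th factorial moment of $N_V$ collapses to a sum over $k$-subsets of $V$ and equals $k!\,e_k(\mathbb{K}_V)$, where $e_k$ is the $k$-th elementary symmetric function of the eigenvalues $\lambda_i$ of $\mathbb{K}_V$; local trace-classness makes this finite, and its vanishing for $k>|V|$ forces $N_V\le|V|$ almost surely. I would then read off the probability generating function $\mathbb{E}[z^{N_V}]=\sum_k(z-1)^k\mathbb{E}\binom{N_V}{k}=\det(I+(z-1)\mathbb{K}_V)=\prod_i(1+(z-1)\lambda_i)$, a non-zero polynomial whose coefficients $\mathbb{P}(N_V=n)$ are nonnegative and which therefore has no positive real root; but $1+(z-1)\lambda_i$ acquires a positive root as soon as $\lambda_i<0$ or $\lambda_i>1$. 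Hence every $\lambda_i\in[0,1]$, i.e.\ $0\le\mathbb{K}_V\le I$ for all finite $V$, and density of the finitely supported vectors upgrades this to $0\le\mathbb{K}\le I$.

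For sufficiency, assume $0\le\mathbb{K}\le I$, and build the process by prescribing its restriction to each finite window and applying the Kolmogorov extension theorem. A simple process on a lattice is determined by its inclusion probabilities, and these are forced to be $\mathbb{P}(A\subseteq\text{config})=\det\mathbb{K}_A$; so I would take as the candidate law of $\text{config}\cap V$ the Möbius inversion $G_V(S):=\sum_{S\subseteq T\subseteq V}(-1)^{|T\setminus S|}\det\mathbb{K}_T$. Three things then need checking: (i) $\sum_{S\subseteq V}G_V(S)=\det\mathbb{K}_\emptyset=1$, immediate from alternating-sum cancellation; (ii) consistency of $\{G_V\}$ under enlarging $V$, which holds because the marginalized family has the same cumulative sums $A\mapsto\det\mathbb{K}_A$ and Möbius inversion is injective; and (iii) $G_V(S)\ge0$, which I would obtain from the classical $L$-ensemble identity $G_V(S)=\det\big[(\mathbb{K}_V(I-\mathbb{K}_V)^{-1})_S\big]\det(I-\mathbb{K}_V)$ — valid when $\mathbb{K}_V<I$, where the right side is visibly nonnegative since $\mathbb{K}_V(I-\mathbb{K}_V)^{-1}\ge0$ — together with a continuity argument (replace $\mathbb{K}$ by $(1-\varepsilon)\mathbb{K}$, let $\varepsilon\downarrow0$) for the boundary case $\mathbb{K}_V\le I$. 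Kolmogorov extension then produces a random subset of $a\mathbb{Z}$, which is a.s.\ locally finite since $\mathbb{E}\,N_V=\operatorname{tr}\mathbb{K}_V<\infty$, hence a point process; and $\mathbb{P}(k_1,\dots,k_n\in\text{config})=\det\mathbb{K}_{\{k_1,\dots,k_n\}}=a^n\det_{i,j}[K(k_i,k_j)]$ for distinct $k_i$ identifies its $n$-point correlation function as $\det_{i,j}[K(k_i,k_j)]$.

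I expect step (iii) to be the real obstacle: this is the only place the \emph{upper} bound $\mathbb{K}\le I$ is used, the lower bound $\mathbb{K}\ge0$ alone being equivalent merely to $G_V(V)=\det\mathbb{K}_V\ge0$ for all $V$. I would prove the $L$-ensemble identity by a generating-function computation, matching $\det(I+x\mathbb{K}_V(I-\mathbb{K}_V)^{-1})$ against $\det(I-\mathbb{K}_V)^{-1}$ term by term; alternatively one can avoid it by the dilation trick, replacing $\mathbb{K}$ by the orthogonal projection
\[
\begin{pmatrix}\mathbb{K} & \sqrt{\mathbb{K}(I-\mathbb{K})}\\ \sqrt{\mathbb{K}(I-\mathbb{K})} & I-\mathbb{K}\end{pmatrix}
\]
on $\ell^2(a\mathbb{Z})\oplus\ell^2(a\mathbb{Z})$, constructing the more tractable determinantal process attached to a projection kernel, and restricting it to the first copy of $a\mathbb{Z}$. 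Everything else — the factorial-moment evaluation, the two bookkeeping points for the Möbius inversion, the Kolmogorov extension, and the final identification of correlation functions — is routine.
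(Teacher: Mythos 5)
Your proof is correct, but it is not the paper's proof: the paper establishes Theorem \ref{Macchi} by citation alone, deferring to Theorem 3 of Soshnikov \cite{So00} (Macchi's theorem on general locally compact Polish spaces), whose proof requires genuine functional analysis -- Fredholm determinants, locally trace class approximations -- because windows there can be continua. You instead exploit the discreteness of $\mathcal{X}=a\mathbb{Z}$: every compact window is a finite set, so both implications reduce to finite matrix algebra. Your necessity argument (simplicity from the vanishing of $\rho_2$ on the diagonal, factorial moments equal to $k!\,e_k(\mathbb{K}_V)$, the polynomial $\mathbb{E}[z^{N_V}]=\det(I+(z-1)\mathbb{K}_V)$ having nonnegative coefficients and hence no positive real root, which forces every eigenvalue of every $\mathbb{K}_V$ into $[0,1]$) is sound; the only point worth a sentence is the final density step, where passing from finitely supported vectors to all of $\ell^2(\mathcal{X})$ uses that an everywhere-defined Hermitian operator is bounded (Hellinger--Toeplitz). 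Your sufficiency argument (M\"obius inversion of the inclusion probabilities $\det\mathbb{K}_A$, consistency via injectivity of the sum-over-supersets transform, nonnegativity from the $L$-ensemble identity $G_V(S)=\det[L_S]\det(I-\mathbb{K}_V)$ with $L=\mathbb{K}_V(I-\mathbb{K}_V)^{-1}$ together with the $(1-\varepsilon)$-regularization, then Kolmogorov extension and identification of the correlation functions) is also correct, and the $L$-ensemble identity you leave as a sketch does follow from exactly the generating-function computation you indicate, namely comparing $\sum_T\det[\mathbb{K}_T]\prod_{i\in T}(w_i-1)=\det(I+(W-I)\mathbb{K}_V)$ with $\det(I-\mathbb{K}_V)\det(I+WL)$ for $W=\mathrm{diag}(w_i)$. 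What your route buys is a self-contained, elementary proof in which the ``locally trace class'' hypothesis is essentially vacuous (finite matrices are trace class), which is all this paper needs on $a\mathbb{Z}$; what the paper's citation buys is the general-space statement (e.g.\ kernels on $\mathbb{R}$, relevant to the continuous sine process), which your finite-window reduction does not cover.
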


\begin{proof}
The theorem with this  statement is proved as Theorem 3 of Soshnikov \cite{So00}.
\end{proof}

We make use of this result for only one kernel $K$, and it is not very esoteric, but we  nonetheless have to verify that it is locally trace class. We will not need anything beyond the 
following result, a specialization of the result stated in Reed and Simon \cite[XI.4, p. 65]{ReSi79}.

\begin{lem}
\label{traceclass}
For a function $K:\, \mathcal{X} \times \mathcal{X} \rightarrow \mathbb{C}$, suppose the operator $\mathbb{K}$ defined as before is non-negative definite. If
$$
\sum_{j \in \mathcal{X}} K(j,j) < +\infty
$$
then $\mathbb{K}:\, \ell^2(\mathcal{X}) \rightarrow \ell^2(\mathcal{X})$ is trace class.
\end{lem}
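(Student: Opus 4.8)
The plan is to reduce the statement to the standard functional-analytic fact that a bounded non-negative operator $\mathbb{K}$ on a separable Hilbert space is trace class if and only if $\sum_\alpha \langle e_\alpha, \mathbb{K} e_\alpha\rangle < \infty$ for some (equivalently every) orthonormal basis $\{e_\alpha\}$, the common value then being $\mathrm{tr}\,\mathbb{K}$; see Reed and Simon \cite[Sec. XI.4]{ReSi79}. Concretely, if this sum is finite, then writing $\mathbb{K}^{1/2}$ for the non-negative square root of $\mathbb{K}$ one has $\sum_\alpha \|\mathbb{K}^{1/2} e_\alpha\|^2 = \sum_\alpha \langle e_\alpha, \mathbb{K} e_\alpha\rangle < \infty$, so $\mathbb{K}^{1/2}$ is Hilbert--Schmidt and hence $\mathbb{K} = \mathbb{K}^{1/2}\cdot \mathbb{K}^{1/2}$, a product of two Hilbert--Schmidt operators, is trace class. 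So the whole content is to pick a convenient orthonormal basis of $\ell^2(a\mathbb{Z})$ and evaluate $\sum_j \langle e_j, \mathbb{K} e_j\rangle$.

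First I would check that $\mathbb{K}$ is actually a bounded operator. Non-negative definiteness applied to vectors supported on a two-point set $\{i,j\}$ shows the principal $2\times2$ submatrix of $K$ indexed by $\{i,j\}$ is non-negative definite; in particular each $K(j,j)$ is real and $\geq 0$, and the determinant condition gives $|K(i,j)|^2 \leq K(i,i)K(j,j)$ for all $i,j \in a\mathbb{Z}$. Combined with the hypothesis $\sum_{j} K(j,j) < \infty$ this yields $\sum_{i,j \in a\mathbb{Z}} |K(i,j)|^2 \leq \big(\sum_j K(j,j)\big)^2 < \infty$, so $\mathbb{K}$ is Hilbert--Schmidt, in particular bounded, and its non-negative square root $\mathbb{K}^{1/2}$ is well defined. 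Then I would take the orthonormal basis $e_j := a^{-1/2}\mathbf{1}_{\{j\}}$, $j \in a\mathbb{Z}$, which is orthonormal for the weighted inner product $\langle f,g\rangle = a\sum_{k} f(k)\overline{g(k)}$ since $\langle e_j, e_j\rangle = a\cdot a^{-1} = 1$. A direct computation gives $\mathbb{K} e_j(i) = a\sum_k K(i,k)e_j(k) = a^{1/2}K(i,j)$, hence
$$
\langle e_j, \mathbb{K} e_j\rangle = a\sum_i e_j(i)\,\overline{\mathbb{K}e_j(i)} = a\,\overline{K(j,j)} = a\,K(j,j),
$$
using that $K(j,j)$ is real and $\geq 0$. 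Summing over $j$,
$$
\sum_{j\in a\mathbb{Z}} \|\mathbb{K}^{1/2} e_j\|^2 \;=\; \sum_{j\in a\mathbb{Z}} \langle e_j, \mathbb{K} e_j\rangle \;=\; a\sum_{j\in a\mathbb{Z}} K(j,j) \;<\; \infty,
$$
so $\mathbb{K}^{1/2}$ is Hilbert--Schmidt and therefore $\mathbb{K} = \mathbb{K}^{1/2}\cdot\mathbb{K}^{1/2}$ is trace class, with $\mathrm{tr}\,\mathbb{K} = a\sum_j K(j,j)$.

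There is no genuine obstacle here; the statement is a specialization of a known result, and the only points requiring attention are bookkeeping ones: tracking the factors of $a$ produced by the weighted inner product $\langle f,g\rangle = a\sum_k f(k)\overline{g(k)}$ and by the normalization $e_j = a^{-1/2}\mathbf{1}_{\{j\}}$, and invoking the correct classical inputs (a bounded non-negative operator has a bounded non-negative square root; the Hilbert--Schmidt norm can be computed in any orthonormal basis; a product of two Hilbert--Schmidt operators is trace class). Alternatively, once one observes that the kernel lies in $\ell^2(a\mathbb{Z}\times a\mathbb{Z})$ with summable diagonal, one can cite the Reed--Simon statement directly; I would likely present the short self-contained square-root argument above since it is clean in this discrete setting.
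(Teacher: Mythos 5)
Your proof is correct. The paper itself does not argue the lemma at all: it simply records it as ``a specialization of the result stated in Reed and Simon [XI.4, p.\ 65]'', i.e.\ the classical fact that a positive integral operator with (there, continuous) kernel is trace class precisely when the diagonal of the kernel is integrable. What you have written is in effect a self-contained proof of that fact in the discrete setting, via the standard route: positivity on two-point supported vectors gives $K(j,j)\ge 0$ and $|K(i,j)|^2\le K(i,i)K(j,j)$, hence the kernel is square-summable and $\mathbb{K}$ is Hilbert--Schmidt (in particular bounded, a point the paper's citation leaves implicit but which is needed for $\mathbb{K}^{1/2}$ to make sense); then $\sum_j\|\mathbb{K}^{1/2}e_j\|^2=\sum_j\langle e_j,\mathbb{K}e_j\rangle=a\sum_j K(j,j)<\infty$ shows $\mathbb{K}^{1/2}$ is Hilbert--Schmidt and $\mathbb{K}=\mathbb{K}^{1/2}\cdot\mathbb{K}^{1/2}$ is trace class, with $\mathrm{tr}\,\mathbb{K}=a\sum_j K(j,j)$. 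Your bookkeeping of the weight $a$ in the inner product and in the normalization $e_j=a^{-1/2}\mathbf{1}_{\{j\}}$ is consistent with the paper's conventions. So the difference is only one of presentation: the paper buys brevity by citing a textbook lemma stated for continuous kernels on $\mathbb{R}^n$ (and trusts the reader to transpose it to $a\mathbb{Z}$), while your argument is slightly longer but fully rigorous in the exact discrete setting used, and additionally verifies boundedness and identifies the trace explicitly.
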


With these tools we can prove Proposition \ref{discrete_sine_exists}.

\begin{proof}[Proof of Proposition \ref{discrete_sine_exists}]
For $0< a\leq 1$, we seek to show there is a point process on $\mathcal{X} = a \mathbb{Z}$ with correlation functions
$$
\det_{1\leq i,j \leq n}\big[S(k_i-k_j)\big].
$$ 
By Macchi's Theorem we 
need only show that the operator $\mathbb{K}:\, \ell^2(\mathcal{X}) \rightarrow \ell^2(\mathcal{X})$ defined by
$$
\mathbb{K}f(j) = a \sum_{k \in a\mathbb{Z}} S(j-k) f(k),
$$
is Hermitian, locally trace class, and satisfies $0 \leq \mathbb{K} \leq I$. Define the Fourier transform 
$\mathcal{F}:\, \ell^2(\mathcal{X}) \rightarrow L^2[-\tfrac{1}{2a},\tfrac{1}{2a}]$ by
$$
(\mathcal{F}f)(\theta) = a\sum_{k \in a\mathbb{Z}} f(k) e^{-2\pi i k \theta}, \quad \quad (\mathcal{F}^{-1}F)(j) = \int_{-1/2a}^{1/2a}  F(\theta) e^{ 2\pi i  j \theta}\, d\theta,
$$
Recall  that, as long as $0 < a \leq 1$, 
\begin{equation}\label{cutoff-sine-kernel}
S(x) = \int_{-1/(2a)}^{1/(2a)} \mathbf{1}_{[-\tfrac{1}{2}, \tfrac{1}{2}]}(\theta) e^{2 \pi i x \theta } \,  d \theta;
\end{equation}
the equality does not hold for $a>1$.
Using \eqref{cutoff-sine-kernel} we  see that
$$
\mathbb{K}f = \mathcal{F}^{-1} G, \quad \textrm{for}\quad G(\theta):= \mathbf{1}_{[-\tfrac{1}{2}, \tfrac{1}{2]}}(\theta) \cdot (\mathcal{F}f)(\theta).
$$
That is, $\mathbb{K}$ is a projection of $\ell^2(\mathcal{X})$ onto the space of functions $g \in \ell^2(\mathcal{X})$ with 
Fourier transform supported on $[-\tfrac{1}{2}, \tfrac{1}{2}]$. That $\mathbb{K}$ is a projection shows that it is Hermitian, that $0 \leq \mathbb{K} \leq I$, and also that $\mathbb{K}$ is 
non-negative definite. That $\mathbb{K}$ is locally trace class follows obviously and immediately from Lemma \ref{traceclass}.
\end{proof}

\begin{rmk}\label{rmk:non-existence}
{\em For $a > 1$, the operator $\mathbb{K}$ ceases to satisfy $0 \leq \mathbb{K} \leq 1$, and so by Macchi's theorem there does not exists an $a$-discrete sine process for $a >1$. Indeed, for $\psi(0):= 1$ and $\psi(k) := 0$ for all $k\neq 0$, we have
$$
\langle \psi, \mathbb{K}\psi \rangle = a^2,
$$
while
$$
\langle \psi, \psi \rangle = a.
$$
and $a^2>a$ for $a>1$.
}
\end{rmk} 
\subsection{Ergodicity}
\label{subsec:ergodicity}

To establish Lemma \ref{sine_ergodic} we make use of another general result.
\begin{thm}
\label{thm:general_ergodic}
Let $\mathcal{X} = a\mathbb{Z}$ and suppose $(u_j)$ are the configurations of a translation invariant determinantal point process on $\mathcal{X}$, with a kernel satisfying the conditions of Macchi's theorem and with $K(j,k)$ representable as $K(j-k)$ with $\lim_{|x|\rightarrow\infty} K(x) = 0$. Then the sequence of point processes with configurations
$$
(u_j),\; (u_j-a),\; (u_j-2a),\; ...
$$
is ergodic.
\end{thm}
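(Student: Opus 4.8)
The plan is to deduce ergodicity from \emph{mixing} of the shift. Write $\theta$ for the map on $\mathrm{Conf}(a\mathbb{Z})$ sending a configuration $(u_j)$ to $(u_j-a)$, and let $\mathbb{P}$ be the law of the determinantal point process; by translation invariance $\theta_*\mathbb{P}=\mathbb{P}$, so $\theta$ is a measure-preserving transformation of $(\mathrm{Conf}(a\mathbb{Z}),\mathcal{B}(\mathfrak{M}),\mathbb{P})$. The orbit $(u_j),(u_j-a),(u_j-2a),\dots$ is precisely $X_0,\theta X_0,\theta^2 X_0,\dots$, so the shift-invariant $\sigma$-algebra of this sequence is identified with the $\theta$-invariant $\sigma$-algebra; hence it suffices to show that $\theta$ is ergodic. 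I will establish the stronger statement that $\theta$ is mixing, i.e.\ $\mathbb{P}(A\cap\theta^{-m}B)\to\mathbb{P}(A)\mathbb{P}(B)$ as $m\to\infty$ for all $A,B\in\mathcal{B}(\mathfrak{M})$; ergodicity then follows, since a $\theta$-invariant event $A$ satisfies $\mathbb{P}(A)=\mathbb{P}(A\cap\theta^{-m}A)\to\mathbb{P}(A)^2$.

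By a routine approximation, mixing need only be verified on a generating algebra. Let $\mathcal{A}$ be the algebra generated by the local events $\{u:\#_B(u)=n\}$ with $B\subset a\mathbb{Z}$ finite and $n\geq0$; this generates $\mathcal{B}(\mathfrak{M})$, and if mixing holds for $A,B\in\mathcal{A}$ then approximating general Borel sets in symmetric difference (and using $\theta_*\mathbb{P}=\mathbb{P}$) extends it to all of $\mathcal{B}(\mathfrak{M})$. Every element of $\mathcal{A}$ is a finite disjoint union of events $\{\#_{B_1}=n_1,\dots,\#_{B_k}=n_k\}$ with the $B_i$ pairwise disjoint and finite (refine to the atoms of the Boolean algebra generated by the regions involved), so by bilinearity of mixing it suffices to prove, for such regions $B_1,\dots,B_k$ and $C_1,\dots,C_l$ and non-negative integers $n_i,m_j$,
\begin{multline*}
\mathbb{P}\Big(\bigcap_i\{\#_{B_i}(u)=n_i\}\cap\bigcap_j\{\#_{C_j+ma}(u)=m_j\}\Big)\\
\longrightarrow\ \mathbb{P}\Big(\bigcap_i\{\#_{B_i}(u)=n_i\}\Big)\cdot\mathbb{P}\Big(\bigcap_j\{\#_{C_j}(u)=m_j\}\Big)\qquad(m\to\infty),
\end{multline*}
the shift by $ma$ being dropped on the right by translation invariance. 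Here one uses that local counts are bounded: since $\rho_n=\det_{1\le i,j\le n}[K(z_i-z_j)]$ has two equal rows whenever two of its arguments coincide, $\rho_n$ vanishes off the diagonal, so $\mathbb{E}\,(\#_B)_{(n)}=a^n\sum_{k\in B^n}\rho_n(k)=0$ for $n>|B|$, whence $\#_B\leq|B|$ almost surely for every finite $B$ (in particular the process is simple). Consequently the vector $(\#_{B_1},\dots,\#_{B_k},\#_{C_1+ma},\dots,\#_{C_l+ma})$ is bounded, and convergence of its law to the product of the marginal laws is equivalent to convergence of the finitely many mixed falling-factorial moments $\mathbb{E}\big[\prod_i(\#_{B_i})_{(s_i)}\prod_j(\#_{C_j+ma})_{(t_j)}\big]$ to $\mathbb{E}\big[\prod_i(\#_{B_i})_{(s_i)}\big]\cdot\mathbb{E}\big[\prod_j(\#_{C_j})_{(t_j)}\big]$, where $(\#)_{(s)}:=\#(\#-1)\cdots(\#-s+1)$.

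The remaining ingredient is the clustering of the determinantal correlations. Each such mixed moment equals $a^{N}\sum_{x,y}\det_{1\le r,s\le N}[K(z_r-z_s)]$, where $z=(x,y)$, $N=\sum_i s_i+\sum_j t_j$, the tuple $x$ runs over configurations of distinct points of $\mathcal{D}_1:=\bigcup_i B_i$ with $s_i$ coordinates in $B_i$, the tuple $y$ runs over configurations of distinct points of $\mathcal{D}_2+ma$ with $t_j$ coordinates in $C_j+ma$ (with $\mathcal{D}_2:=\bigcup_j C_j$), and the sum carries the global distinctness constraint $x_r\neq y_s$. The cardinality of this index set does not depend on $m$; indeed, for $m$ large every $x_r$ and $y_s$ are automatically distinct, so the constraint is vacuous and the index set is a fixed product set. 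Moreover $|x_r-y_s|\to\infty$ uniformly as $m\to\infty$, so by the hypothesis $K(x)\to0$ the off-diagonal blocks $[K(x_r-y_s)]$ of the matrix tend to $0$ while all its entries remain bounded ($\mathbb{K}$ being a positive contraction). By continuity of the determinant each summand converges to $\det[K(x_r-x_{r'})]\cdot\det[K(y'_s-y'_{s'})]$ with $y':=y-ma\in\mathcal{D}_2$, and the fixed finite sum converges to the product of the two separated sums, namely $\mathbb{E}\big[\prod_i(\#_{B_i})_{(s_i)}\big]$ and $\mathbb{E}\big[\prod_j(\#_{C_j})_{(t_j)}\big]$. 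This yields the factorization of moments, hence of the joint laws, hence mixing on $\mathcal{A}$, hence ergodicity of $\theta$ and of the sequence $(u_j),(u_j-a),(u_j-2a),\dots$.

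I expect the main difficulty to be organizational rather than analytic: the clustering estimate is immediate once $K(x)\to0$ is available, and the real work is in the measure-theoretic bookkeeping — that the local events generate $\mathcal{B}(\mathfrak{M})$ and form an algebra, that mixing transfers from this algebra to all of $\mathcal{B}(\mathfrak{M})$, and that for bounded local statistics convergence of joint laws reduces to convergence of finitely many factorial moments. An essentially equivalent but perhaps slicker route would prove convergence of the Laplace or generating functionals directly from the Fredholm-determinant representation of the process, using that the restriction of $\mathbb{K}$ to $\mathcal{D}_1\cup(\mathcal{D}_2+ma)$ converges in trace norm to $\mathbb{K}|_{\mathcal{D}_1}\oplus\mathbb{K}|_{\mathcal{D}_2}$ as $m\to\infty$; I would only indicate that route and carry out the falling-factorial version, which needs nothing beyond the elementary determinant limit above.
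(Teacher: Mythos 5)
Your argument is correct, but it is worth noting that the paper does not actually prove Theorem \ref{thm:general_ergodic} from scratch: its ``proof'' is a citation of Theorem 4.2.34 of \cite{AnGuZe10} (stated there for $\mathcal{X}=\mathbb{R}^d$) together with the remark that the argument carries over to $a\mathbb{Z}$. What you have written is essentially the content behind that citation, carried out directly in the lattice setting: you reduce ergodicity of the sequence to ergodicity (indeed mixing) of the shift $\theta$, verify mixing on the algebra of local count events, reduce convergence of joint laws of the bounded counts to convergence of finitely many mixed falling-factorial moments, and obtain the factorization of those moments from the clustering of the determinantal correlations, i.e.\ from $K(x)\to 0$ plus continuity of a finite determinant. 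The discrete setting genuinely simplifies things relative to $\mathbb{R}^d$ (finite index sets, $\#_B\le |B|$ almost surely, no integrability issues), so your write-up in effect substantiates the paper's ``carries over straightforwardly'' claim rather than merely invoking it. Two small bookkeeping points, both of the routine kind you flagged: (i) the complement of $\{\#_B=n\}$ is a countable union of count events, so to express elements of your algebra as \emph{finite} disjoint unions of elementary events you should either work modulo $\mathbb{P}$-null sets using the a.s.\ bound $\#_{\{x\}}\le 1$ (which you established via the vanishing of $\rho_2$ on the diagonal), or run the standard approximation argument on a semi-algebra; and (ii) the identity expressing the mixed factorial moment as $a^N\sum\det[K(z_r-z_s)]$ requires the regions $B_i$ and $C_j+ma$ to be pairwise disjoint, which holds for all sufficiently large $m$ --- harmless, since only the limit $m\to\infty$ is needed. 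With those caveats the proof is complete and is a legitimate alternative to citing \cite{AnGuZe10}.
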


\begin{proof} 
This is the content of Theorem 4.2.34 of \cite{AnGuZe10}. The result there is proved for $\mathcal{X} = \mathbb{R}^d$, but the proof carries over straightforwardly to $\mathcal{X} = a\mathbb{Z}$.
\end{proof}

Because $S(x) \rightarrow 0$ as $|x| \rightarrow\infty$, Theorem \ref{thm:general_ergodic} immediately implies Lemma \ref{sine_ergodic}.

%
%
\section{Gap Probabilities for the Alternative Hypothesis Point Process }\label{sec:Gaps}

This Appendix derives a formula for the probabilities that a random gap in the Alternative Hypothesis point process (defined in Sect. \ref{subsec:AHPP}) is of the size $1/2$, $1$, $3/2$, etc. Let $G_{L/2}$ be the conditional probability that  a configuration in the $1/2$-discrete sine-kernel process 
has position  $L/2$  occupied by a point while  none of positions $(L-1)/2, (L-2)/2,...,1/2$ are occupied, given that the position $0$ is occupied. That is, for the $1/2$-discrete sine-kernel process,
$$
G_{L/2} = \mathbb{P}[\#_{\{L/2\}}=1, \, \#_{\{1/2,...,(L-1)/2\}}=0\, \Big| \,  \#_{\{0\}} =1].
$$
Because the $1/2$-discrete sine-kernel process is translation-invariant, and because the alternative hypothesis point process is just the $1/2$-discrete sine-kernel process with 
an independent translation, $G_{L/2}$ is the probability that a randomly chosen gap from either of these processes is of size $L/2$.

In order to state the result, for $L \geq 1$, we let $\Sigma_L$ be the $L\times L$ symmetric Toeplitz matrix
\begin{equation*}
\label{eqn:Toeplitz}
\Sigma_L = (a_{k-j})_{0 \leq j,k \leq L-1},
\end{equation*}
where
$$
a_j := S(j/2) = \begin{cases} 1 & \textrm{for}\; j=0 \\ 0 & \textrm{for} j \neq 0 \, \textrm{even} \\ \frac{2(-1)^{(|j|-1)/2}}{\pi |j|} & \textrm{for} j \, \textrm{odd}, \end{cases}.
$$
For example, noting that $a_j = a_{-j}$, we have
$$
\Sigma_4 = 
\begin{pmatrix} 1 & a_1 & 0 & a_3 \\ a_{-1} & 1 & a_1 & 0 \\0 & a_{-1}  & 1 & a_1 \\a_{-3} & 0 & a_{-1} &1
\end{pmatrix}
=
\begin{pmatrix} 1 & \tfrac{2}{\pi} & 0 & -\tfrac{2}{3\pi} \\ \tfrac{2}{\pi} & 1 & \tfrac{2}{\pi} & 0 \\0 & \tfrac{2}{\pi}  & 1 & \tfrac{2}{\pi} \\\ -\tfrac{2}{3\pi} & 0 & \tfrac{2}{\pi} &1
\end{pmatrix}
$$
We  let
$$
\omega(L) = \det(I - \frac{1}{2}\Sigma_L),
$$
for $I$ the $L\times L$ identity matrix, with the convention that
$$
\omega(0) = 1.
$$
For example, 
$$
\omega (4) =  \det \Big(\frac{1}{2} \begin{pmatrix} 1 & -a_1 & 0 & -a_3 \\-a_1 & 1 & -a_1 &0 \\  0& -a_1  & 1 & -a_1\\ -a_3 & 0 & -a_1 &1 \end{pmatrix}\Big)
= \frac{1}{16} \det  \begin{pmatrix} 1 & -\tfrac{2}{\pi} & 0 & \tfrac{2}{3\pi} \\ -\tfrac{2}{\pi} & 1 & -\tfrac{2}{\pi} & 0 \\0 & -\tfrac{2}{\pi}  & 1 & -\tfrac{2}{\pi} \\ \tfrac{2}{3\pi} & 0 & -\tfrac{2}{\pi} &1
\end{pmatrix}.
$$


\begin{thm}
\label{thm:gaps}
For the Alternative Hypothesis point process the gap probabilities $G_{L/2}$ for $L \ge 1$ satisfy
$$
G_{L/2} = 2(\omega(L+1)+ \omega(L-1) - 2\omega(L)),
$$
where $\omega(L)= \det(I - \frac{1}{2}\Sigma_L)$.
\end{thm}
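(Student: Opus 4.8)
The plan is to express the gap probability $G_{L/2}$ in terms of the occupancy events of the $1/2$-discrete sine process and then use the general principle that, for a determinantal process, the probability that a finite set of sites is \emph{empty} is a Fredholm-type determinant $\det(I - \mathbb{K}_D)$ restricted to those sites. First I would recall that for a determinantal point process on $\tfrac12\mathbb{Z}$ with correlation kernel $K(j,k) = S(j-k)$ (so that on sites $\{0,\tfrac12,\dots,\tfrac{L-1}{2}\}$ the relevant matrix is exactly $\tfrac12\Sigma_L$, the factor $\tfrac12$ coming from $a=\tfrac12$ in the counting-measure normalization of Proposition~\ref{discrete_sine_exists}), the probability that all of a given finite set $A$ of sites is unoccupied equals $\det(I - K|_A)$, a finite determinant; this is a standard fact about determinantal processes (it follows from inclusion–exclusion applied to the correlation functions $\rho_n$ in \eqref{determinantal_def}). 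Writing $E(L) := \mathbb{P}[\#_{\{0,\,1/2,\dots,\,(L-1)/2\}} = 0]$, we thus get $E(L) = \det(I - \tfrac12\Sigma_L) = \omega(L)$, with $E(0) = 1$ matching the convention $\omega(0) = 1$.

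Next I would convert the \emph{gap} probability into differences of the empty-interval probabilities. The unconditional probability that sites $\tfrac12,\dots,\tfrac{L-1}{2}$ are all empty but site $0$ is occupied can be obtained by inclusion–exclusion on the occupancy of site $0$: it equals $E'(L) - E'(L+1)$ where $E'(m)$ denotes the probability that sites $0,\tfrac12,\dots,\tfrac{m-1}{2}$ are all empty — but here one has to be careful which endpoint is adjoined. Cleanly, let me define, for an interval of $L$ consecutive sites, the probability of emptiness $\omega(L)$; then the probability that the block $\{0,\tfrac12,\dots,\tfrac{L}{2}\}$ of $L+1$ sites is empty \emph{except} that the two endpoints $0$ and $\tfrac{L}{2}$ may be anything is handled by a second-difference. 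Concretely, the event ``$0$ occupied, $\tfrac12,\dots,\tfrac{L-1}{2}$ empty, $\tfrac{L}{2}$ occupied'' has probability, by two applications of inclusion–exclusion (one for each endpoint),
$$
\omega(L+1) - 2\,\omega(L) + \omega(L-1),
$$
where $\omega(L+1)$ counts emptiness of all $L+1$ sites $0,\dots,\tfrac{L}{2}$, each $\omega(L)$ removes the constraint at one endpoint, and $\omega(L-1)$ corrects the double-subtraction of the interior block $\tfrac12,\dots,\tfrac{L-1}{2}$. This is the translation-invariant joint probability; I would verify the bookkeeping by checking small cases ($L=1$: probability $0$ and $\tfrac12$ both occupied with nothing between, which should be $\omega(2) - 2\omega(1) + \omega(0)$).

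Finally I would pass to the conditional probability. Since the process is translation invariant and the $1/2$-discrete sine process has one-point intensity $\rho_1(x) = S(0) = 1$ against counting measure weighted by $a = \tfrac12$, the probability that a given site is occupied is $\tfrac12$; hence $\mathbb{P}[\#_{\{0\}} = 1] = \tfrac12$ (using simplicity from Proposition~\ref{prop:simple} so $\#_{\{0\}} \in \{0,1\}$). Dividing,
$$
G_{L/2} = \frac{\omega(L+1) - 2\omega(L) + \omega(L-1)}{1/2} = 2\big(\omega(L+1) + \omega(L-1) - 2\omega(L)\big),
$$
which is the claimed formula; and the remark preceding the theorem already identifies this $G_{L/2}$ with the gap distribution of the AH point process since that process is just a random translation of the $1/2$-discrete sine process. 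The main obstacle I anticipate is getting the inclusion–exclusion bookkeeping exactly right — in particular justifying the empty-set-probability formula $\mathbb{P}[\text{sites } A \text{ empty}] = \det(I - K|_A)$ from the correlation functions, and correctly accounting for the two endpoint constraints so that the second difference $\omega(L+1) - 2\omega(L) + \omega(L-1)$ emerges rather than a single difference — together with the clean handling of the edge cases $L = 1$ (where $\omega(L-1) = \omega(0) = 1$ enters) and the normalization factor $a = \tfrac12$ sitting inside $\Sigma_L$ versus outside.
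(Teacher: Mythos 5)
Your proposal is correct and takes essentially the same route as the paper: it reduces $G_{L/2}$ to hole probabilities of $L-1$, $L$, and $L+1$ consecutive sites via a second difference (inclusion--exclusion over the two endpoint occupancies), identifies each hole probability with $\det(I-\tfrac12\Sigma_m)=\omega(m)$, and divides by $\mathbb{E}\,\#_{\{0\}}=\tfrac12$ using simplicity and translation invariance. The only cosmetic difference is that the paper invokes the generating-function identity $\mathbb{E}\,z^{\#_B}=\det(I+(z-1)K|_B)$ of Theorem \ref{thm:gaps_to_det} at $z=0$, whereas you cite the equivalent empty-set determinant formula directly.
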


We record the following values:
\begin{align*}
G_{1/2} &= \tfrac{1}{2} - \tfrac{2}{\pi^2} \approx .297 \\
G_1 &= \tfrac{1}{4}+\tfrac{2}{\pi^2} \approx .453 \\
G_{3/2} &= \tfrac{1}{8} +  \tfrac{4}{9 \pi^2} + \tfrac{32}{9 \pi^4} \approx .207 \\
G_2 &= \tfrac{1}{16} + \tfrac{1}{18 \pi^2} - \tfrac{224}{81 \pi^4}  \approx 0.0397 \\
G_{5/2} &= \tfrac{1}{32} - \tfrac{209}{1800 \pi^2} - \tfrac{1664}{2025 \pi^4}
 - \tfrac{131072}{18225 \pi^6}   \approx 0.00357 \\
G_3 &= \tfrac{1}{64} - \tfrac{3}{ 25 \pi^2}- \tfrac{13312}{16875 \pi^4}
 + \tfrac{2097152}{455625 \pi^6}   \approx 0.000156
\end{align*}

We emphasize that these are gap probabilities for the Alternative Hypothesis point process; there may exist other models of zeros which are consistent with the alternative hypothesis but which have different gap probabilities. Farmer et al \cite{ FarGonLee14}
and Aryan \cite{Ary19} have considered upper and lower bounds for gap probabilities in this more general setting.

Theorem \ref{thm:gaps} is a consequence of a well-known tool from the theory of determinantal point processes.
In calculations below we  adopt the convention that for $x \in \mathbb{R}$,
\begin{equation}
\label{eq:0powers}
0^x = \begin{cases} 1 & \textrm{if}\; x = 0 \\ 0 & \textrm{otherwise}. \end{cases}
\end{equation}

\begin{thm}
\label{thm:gaps_to_det}
For all $z \in \mathbb{C}$, for a determinantal point process on $a\mathbb{Z}$ with kernel $K : (a\mathbb{Z}) \times (a\mathbb{Z}) \rightarrow \mathbb{R}$ as in \eqref{determinantal_def}, for any finite $B \subset a\mathbb{Z}$,
$$
\mathbb{E} z^{\#_B} = \det(I + (z-1) K|_B),
$$
where $K|_B \,: L^2(B)\rightarrow L^2(B)$ is the operator defined by
$$
(K|_B \phi)(j) = a \sum_{k \in B} K(j,k) \phi(k), \quad \textrm{for}\; j \in B.
$$
\end{thm}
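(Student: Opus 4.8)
The plan is to prove Theorem~\ref{thm:gaps_to_det} as a standard generating-function identity for determinantal point processes, and then specialize. First I would recall that for a determinantal point process with kernel $K$, the probability of finding exactly the configuration of points $k_1,\dots,k_m$ inside a finite set $B \subset a\mathbb{Z}$ (and no others in $B$) is given by an inclusion-exclusion over the correlation functions: summing the $j$-point correlation functions with alternating signs produces the ``hole probabilities,'' and more generally the factorial-moment generating function $\mathbb{E}\,z^{\#_B}$ has a clean closed form. Concretely, writing $\#_B = \#_B(u)$, one expands
$$
\mathbb{E}\, z^{\#_B} = \mathbb{E}\,(1+(z-1))^{\#_B} = \sum_{m \ge 0} (z-1)^m \,\mathbb{E}\binom{\#_B}{m},
$$
and the $m$-th factorial moment $\mathbb{E}\binom{\#_B}{m} = \frac{1}{m!}\mathbb{E}\sum_{\mathrm{distinct}} \mathbf{1}_{B^m}(u_{j_1},\dots,u_{j_m})$ equals, by the definition of correlation functions on $a\mathbb{Z}$ in Section~\ref{sec:to_pointprocesses}, the quantity $\frac{a^m}{m!}\sum_{k \in B^m} \det_{1\le i,j\le m}[K(k_i,k_j)]$. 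Then I would invoke the von Koch / Fredholm expansion of an operator determinant: for a finite-rank (here finite-dimensional, since $B$ is finite) operator $K|_B$ on $L^2(B)$ with the inner product weighting each point by $a$,
$$
\det(I + (z-1)K|_B) = \sum_{m \ge 0} \frac{(z-1)^m\, a^m}{m!} \sum_{k \in B^m} \det_{1 \le i,j \le m}\big[K(k_i,k_j)\big],
$$
which matches the expansion above term by term.

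The key steps in order are therefore: (1) expand $z^{\#_B}$ binomially in powers of $(z-1)$ and identify the coefficients as factorial moments; (2) rewrite each factorial moment using the determinantal form of the correlation functions, being careful that on $a\mathbb{Z}$ each sum over a point carries a weight $a$ (matching the inner product $\langle f,g\rangle = a\sum f\bar g$ used for $\ell^2(a\mathbb{Z})$, so that $K|_B$ as defined in the statement is the right operator); (3) recall (or quote) the Fredholm-determinant series for $\det(I + \lambda K|_B)$ on a finite set, and observe the two series agree; (4) justify that all sums are finite — since $B$ is finite, $\#_B$ is a bounded random variable and $K|_B$ acts on a finite-dimensional space, so every series truncates and no convergence subtleties arise. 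This gives the identity for all $z \in \mathbb{C}$.

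The main obstacle — really the only delicate point — is bookkeeping of the weights: making sure the factor $a^m$ from the counting measure on $(a\mathbb{Z})^m$ and the factor $a^m$ implicit in writing the Fredholm minor expansion of $\det(I+(z-1)K|_B)$ with respect to the $a$-weighted inner product cancel correctly against each other, so that the final identity has no stray powers of $a$. Once the measure conventions are pinned down (the paper fixes them right before Lemma~\ref{det_unique} and in the proof of Proposition~\ref{discrete_sine_exists}), this is routine. I would then note that the result is classical — it is, e.g., the generating-function identity in Soshnikov~\cite{So00} or \cite[Ch.~4]{AnGuZe10} — and give the short self-contained derivation above, remarking that for $z \in [0,1]$ and $\#_B \ge 0$ it is literally a probability generating function, with the extension to all $z \in \mathbb{C}$ following since both sides are polynomials in $z$ of degree $\le |B|$. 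The subsequent specialization to $K = S$ on $\tfrac12\mathbb{Z}$, $z = 0$, and $B = \{1/2,\dots,(L-1)/2\}$ or $\{0,1/2,\dots,(L-1)/2\}$ to extract $G_{L/2}$ via $\omega(L) = \det(I - \tfrac12 \Sigma_L)$ is then a direct computation with inclusion–exclusion on the events defining $G_{L/2}$, which I would carry out separately in the proof of Theorem~\ref{thm:gaps}.
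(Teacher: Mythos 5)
Your argument is correct, but it takes a different route from the paper: the paper's entire proof of Theorem \ref{thm:gaps_to_det} is a citation of Theorem 2 in Soshnikov \cite{So00}, together with the remark that at $z=0$ one uses the convention \eqref{eq:0powers}, whereas you give a self-contained finite-dimensional derivation. Your derivation is sound: expanding $z^{\#_B}=(1+(z-1))^{\#_B}$ gives factorial moments $\mathbb{E}\binom{\#_B}{m}$, which by the paper's definition of correlation functions on $a\mathbb{Z}$ equal $\frac{a^m}{m!}\sum_{k\in B^m}\det[K(k_i,k_j)]$ (the unrestricted sum is harmless since the determinant vanishes on repeated entries), and since the matrix of $K|_B$ in the standard basis is $\big(aK(j,k)\big)_{j,k\in B}$, the principal-minor expansion of $\det(I+(z-1)K|_B)$ produces exactly the same series, with the powers of $a$ matching as you say. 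Two small points you should tighten: (1) your claim that $\#_B$ is bounded deserves a sentence — e.g., note that $\rho_m$ vanishes on $B^m$ for $m>|B|$ (repeated points force a vanishing determinant), so $\mathbb{E}\binom{\#_B}{m}=0$ and hence $\#_B\le|B|$ almost surely, which justifies the termwise expectation; and (2) at $z=0$ your identity reads $\mathbb{E}\,0^{\#_B}=\det(I-K|_B)$ with the convention $0^0=1$, which is exactly the convention \eqref{eq:0powers} the paper flags and which is what gets used in the gap computation, so it is worth stating explicitly. What your approach buys is transparency — everything reduces to a polynomial identity on a finite set, with no Fredholm-theoretic machinery — at the cost of a page of bookkeeping the paper avoids by quoting the general result; the paper's citation also covers infinite $B$ and genuinely infinite-dimensional kernels, which your finite-rank argument does not, though that generality is not needed for Theorem \ref{thm:gaps}.
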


\begin{proof}
This is a special case of Theorem 2 in \cite{So00}. This result interprets $z^{\#_B}$ for $z=0$ with the convention \eqref{eq:0powers}.
\end{proof}

We can now prove  Theorem \ref{thm:gaps}.

\begin{proof}[Proof of Theorem \ref{thm:gaps}]
We will specialize Theorem \ref{thm:gaps} to the $1/2$-discrete sine-kernel process, and take $z = 0$. Note that for $L \geq 2$,
\begin{multline*}
\mathbb{P}(\#_{\{L/2\}}=1, \#_{\{1/2,..., (L-1)/2\}} = 0 \,\Big|\, \#_{\{0\}} = 1) \\
= \frac{\mathbb{P}(\#_{\{L/2\}}=1, \#_{\{1/2,..., (L-1)/2\}} = 0, \#_{\{0\}} = 1) }{\mathbb{P}(\#_{\{0\}}) = 1)} \\
= \frac{\mathbb{E} ( (1- 0^{\#_{\{L/2\}}})\,0^{\#_{\{1/2,..., (L-1)/2\}}} \, (1 - 0^{\#_{\{0\}}} ))}{\mathbb{E} \#_{\{0\}}},
\end{multline*}
using the fact that this process is simple. But $\mathbb{E} \#_{\{0\}} = 1/2$, and
\begin{multline*}
\mathbb{E} ( (1- 0^{\#_{\{L/2\}}})\,0^{\#_{\{1/2,..., (L-1)/2\}}} \, (1 - 0^{\#_{\{0\}}} )) \\ = \mathbb{E} \, 0^{\#_{\{1/2,...,(L-1)/2\}}} + \mathbb{E}\, 0^{\#_{\{0,...,L/2\}}} - \mathbb{E}\, 0^{\#_{\{1/2,...,L/2\}}} - \mathbb{E} \,0^{\#_{\{0,...,(L-1)/2\}}}.
\end{multline*}
Using Theorem \ref{thm:gaps_to_det} and the fact that the $1/2$-discrete sine-kernel has the kernel $(x,y)\mapsto S(x-y)$, we see that
$$
\mathbb{E} \, 0^{\#_{\{1/2,...,(L-1)/2\}}} = \det(I - \frac{1}{2} \Sigma_{L-1}),
$$
$$
\mathbb{E}\, 0^{\#_{\{0,...,L/2\}}} = \det(I - \frac{1}{2} \Sigma_{L+1}),
$$
$$
\mathbb{E}\, 0^{\#_{\{1/2,...,L/2\}}} = \mathbb{E} \,0^{\#_{\{0,...,(L-1)/2\}}} = \det(I - \frac{1}{2} \Sigma_{L}).
$$
Substituting these values above proves the theorem for $L \geq 2$. For $L=1$, we have
$$
\mathbb{P}( \#_{\{1/2\}} = 1\, \Big| \, \#_{\{0\}} = 1) = \frac{\mathbb{E}\, (1 - 0^{\#_{\{1/2\}}}) (1 - 0^{\#_{\{0\}}})}{\mathbb{E} \#_{\{0\}} },
$$
and the proof of the claim proceeds the same way.
\end{proof}


\end{document}